\documentclass[a4paper,12pt]{article}
\usepackage{amsthm}
\usepackage{amsfonts}
\usepackage{amsmath}
\usepackage{amssymb}
\usepackage[pdftex]{graphicx}
\usepackage{hyperref}
\usepackage{datetime}
\usepackage{commath}
\usepackage{float}
\restylefloat{table}
\newtheorem{theorem}{Theorem}[section]
\theoremstyle{definition}

\theoremstyle{remark}

\theoremstyle{plain}
\newtheorem{lemma}[theorem]{Lemma}

\newtheorem{proposition}[theorem]{Proposition}
\newcommand{\N}{\mathbb{N}}

\newcommand{\suminfty}{\sum_{\N}{a_n}}

\begin{document}
\title{A series of series topologies on $\mathbb{N}$}
\author{Jason DeVito \thanks{The University of Tennessee at Martin email: jdevito@utm.edu} \and Zachary Parker \thanks{The University of Tennessee at Martin; email: zacppark@ut.utm.edu}}
\date{}

\maketitle

\begin{abstract}  Each series $\sum_{n=1}^\infty a_n$ of real strictly positive terms gives rise to a topology on $\mathbb{N} = \{1,2,3,...\}$ by declaring a proper subset $A\subseteq \mathbb{N}$ to be closed if $\sum_{n\in A} a_n < \infty$.  We explore the relationship between analytic properties of the series and topological properties on $\mathbb{N}$.  In particular, we show that, up to homeomorphism, $|\mathbb{R}|$-many topologies are generated.  We also find an uncountable family of examples $\{\N_\alpha\}_{\alpha \in [0,1]}$ with the property that for any $\alpha < \beta$, there is a continuous bijection $\N_\beta\rightarrow \N_\alpha$, but the only continuous functions $\N_\alpha\rightarrow \N_\beta$ are constant.

\end{abstract} 

\section{Introduction}\label{sec:intro}
Consider a series $\sum_{n=1}^\infty a_n$ whose terms are strictly positive real numbers.  For any $A\subseteq\mathbb{N} = \{1,2,3,...\}$, we will use the notation $\sum_A a_n$ as a shorthand for $\sum_{n\in A} a_n$.

If $A_1,A_2\subseteq \mathbb{N}$ are chosen so that $\sum_{A_1} a_n < \infty$ and $\sum_{A_2} a_n <\infty$, then $$\sum_{A_1 \cup A_2} a_n \leq \sum_{A_1}a_n + \sum_{A_2} a_n < \infty.$$  Further, if $\sum_{A} a_n < \infty$ and $B\subseteq A$, then $\sum_B a_n \leq \sum_A a_n< \infty$.  Thus, the collection $\{A: \sum_A a_n < \infty\} \cup \{\N\}$ forms a topology of closed sets on $\mathbb{N}$.  We use the notation $\N_{a_n}$ to refer to $\N$ equipped with this topology.

If one does not include $\mathbb{N}$ in the topology, then one obtains an ideal of sets $\mathcal{I}\subseteq \mathcal{P}(\mathbb{N})$, called a summable ideal.  Summable ideals have been studied considerably by set theorists (see, for example \cite{Fl,Hr,Ka,KwTr,Mr}), but do not seem to have been considered from the topological perspective before.

One focus of our paper is to relate analytic properties of the series with some the usual basic topological properties.  For example, we can characterize when $\N_{a_n}$ is connected.

\begin{theorem}\label{thm:conv}  The series $\sum_\N a_n$ diverges iff $\N_{a_n}$ is connected.

\end{theorem}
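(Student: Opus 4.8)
The plan is to invoke the standard characterization that a topological space is connected if and only if its only clopen (simultaneously closed and open) subsets are the empty set and the whole space. Thus it suffices to prove the contrapositive equivalence: $\sum_\N a_n < \infty$ if and only if $\N_{a_n}$ possesses a proper nonempty clopen subset.

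For one direction, suppose $\sum_\N a_n < \infty$. Take $A = \{1\}$. Since $\sum_A a_n = a_1 < \infty$, the set $A$ is closed; and its complement $\N\setminus A = \{2,3,4,\dots\}$ satisfies $\sum_{\N\setminus A} a_n \le \sum_\N a_n < \infty$, so $\N\setminus A$ is closed as well, whence $A$ is open. Therefore $A$ is a proper nonempty clopen subset, so $\N_{a_n}$ is disconnected. Contrapositively, if $\N_{a_n}$ is connected then $\sum_\N a_n$ diverges.

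Conversely, suppose $\N_{a_n}$ is disconnected and let $A$ be a proper nonempty clopen subset. Because $A$ is closed and $A \ne \N$, the definition of the topology gives $\sum_A a_n < \infty$. Because $A$ is open, $\N\setminus A$ is closed, and since $A \ne \emptyset$ we have $\N\setminus A \ne \N$, so again the definition yields $\sum_{\N\setminus A} a_n < \infty$. Adding, $\sum_\N a_n = \sum_A a_n + \sum_{\N\setminus A} a_n < \infty$, so the series converges. Contrapositively, if $\sum_\N a_n$ diverges then $\N_{a_n}$ is connected.

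The whole argument is a direct unwinding of the definition, so I do not anticipate a genuine obstacle; the only point needing attention is the asymmetric role of $\N$ among the closed sets. One must make sure that whenever $A$ is clopen and proper, both $A$ and $\N\setminus A$ are \emph{proper} subsets of $\N$, so that the finiteness clause in the definition (rather than the exceptional clause ``$A=\N$'') applies to each of them; this is exactly why the hypotheses ``$A\neq\N$'' and ``$A\neq\emptyset$'' are both used above.
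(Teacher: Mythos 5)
Your proof is correct and follows essentially the same route as the paper's: the paper proves divergence $\Leftrightarrow$ connectedness as part of Theorem \ref{thm:disc}, showing convergence forces every set (hence $\{1\}$) to be closed, and conversely extracting from a disconnection two proper closed sets covering $\N$ whose sums add to a finite total --- exactly your clopen set and its complement. The only cosmetic difference is that the paper records the stronger intermediate fact that convergence makes the space discrete, whereas you exhibit a single clopen singleton directly.
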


Likewise, we have a characterization of compactness.

\begin{theorem}\label{thm:comp} The series $\sum_\N a_n$ has $\inf\{a_n\} > 0$ iff $\N_{a_n}$ is compact.

\end{theorem}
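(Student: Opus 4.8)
The plan is to pass to the language of open covers, using the description of the topology: a set $U\subseteq\N$ is open exactly when $U=\N$ or $\sum_{\N\setminus U}a_n<\infty$. The single structural fact driving both implications is that a subset $A\subseteq\N$ with $\sum_A a_n<\infty$ must be finite whenever $\inf\{a_n\}>0$ (an infinite $A$ would contribute at least $\inf\{a_n\}>0$ infinitely often), while it may be infinite once $\inf\{a_n\}=0$.

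For the forward implication, suppose $c:=\inf\{a_n\}>0$. Then every proper open subset of $\N$ has finite complement, so the topology is contained in the cofinite topology on $\N$; in fact it equals it, though we only need the inclusion. Given an open cover $\mathcal U$ of $\N$, choose any $U_0\in\mathcal U$. If $U_0=\N$ we are done; otherwise $\N\setminus U_0=\{m_1,\dots,m_k\}$ is finite, and choosing $U_i\in\mathcal U$ with $m_i\in U_i$ produces the finite subcover $\{U_0,U_1,\dots,U_k\}$. Hence $\N_{a_n}$ is compact.

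For the converse I argue contrapositively: suppose $\inf\{a_n\}=0$. Pick indices $n_1<n_2<\cdots$ with $a_{n_k}<2^{-k}$, and for each $j$ set $S_j=\{n_k:k\ge j\}$, so that $\sum_{S_j}a_n\le\sum_{k\ge j}2^{-k}<\infty$ and $S_j\ne\N$; thus each $U_j:=\N\setminus S_j$ is open. Since $\bigcap_j S_j=\emptyset$, the family $\{U_j\}_{j\in\N}$ covers $\N$. But for any finite set of indices $j_1<\cdots<j_r$ we have $U_{j_1}\cup\cdots\cup U_{j_r}=U_{j_r}$, which omits the point $n_{j_r}$, so no finite subfamily covers $\N$, and $\N_{a_n}$ is not compact. (Equivalently, $S_1$ is an infinite subspace on which every subset is clopen and which is closed in $\N$, hence a noncompact closed subspace.)

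There is no substantial obstacle here; the only points requiring a little care are the routine translation between ``closed $=$ summable or all of $\N$'' and statements about open covers, and checking that the set extracted in the converse is genuinely infinite, which is immediate since the $n_k$ are strictly increasing.
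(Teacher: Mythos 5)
Your proof is correct and follows essentially the same route as the paper's: the forward direction reduces to the observation that $\inf\{a_n\}>0$ forces proper closed sets to be finite (so the topology sits inside the cofinite topology, which is compact), and the converse extracts an infinite summable set $\{n_k\}$ and uses the same nested open cover $\N\setminus\{n_k:k\ge j\}$ with no finite subcover. The only cosmetic point is that in the forward direction you should take $U_0$ nonempty (the empty set is a proper open set with infinite complement), which is harmless since any cover of $\N$ contains a nonempty member.
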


Another focus of this paper is on continuity of maps to and from an $\N_{a_n}$.  We first establish that continuous maps between an $\N_{a_n}$ and a ``nice'' topological space are constant, unless $\N_{a_n}$ is discrete.  More specifically, we prove the following theorem.

\begin{theorem}\label{thm:nicecont}
If $X$ is compact, connected, and Hausdorff, or $X$ is path-connected then any continuous map $f:X\rightarrow \N_{a_n}$ is constant.  If $Y$ is metrizable and $\sum_\N a_n$ diverges, then any continuous function $f:\N_{a_n}\rightarrow Y$ is constant.
\end{theorem}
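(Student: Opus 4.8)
The plan is to treat the three assertions separately, beginning with the statement about maps \emph{out} of $\N_{a_n}$. First I would record that when $\sum_\N a_n$ diverges the space $\N_{a_n}$ is hyperconnected, i.e.\ any two nonempty open sets meet: if $U_1,U_2$ are nonempty open sets then their complements $C_1,C_2$ are proper closed subsets, so $\sum_{C_1}a_n<\infty$ and $\sum_{C_2}a_n<\infty$, hence $\sum_{C_1\cup C_2}a_n<\infty$, forcing $C_1\cup C_2\neq\N$ and so $U_1\cap U_2\neq\emptyset$. Now if $f:\N_{a_n}\to Y$ were continuous and nonconstant, choose $p,q\in\N$ with $f(p)\neq f(q)$; since $Y$ is metrizable, hence Hausdorff, there are disjoint open sets $V_p\ni f(p)$ and $V_q\ni f(q)$, and then $f^{-1}(V_p)$ and $f^{-1}(V_q)$ are disjoint nonempty open subsets of $\N_{a_n}$, contradicting hyperconnectedness. (Only the Hausdorff property of $Y$ is actually used here, so metrizability is more than enough.)

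For the assertions about maps \emph{into} $\N_{a_n}$, the single structural fact I need is that every finite subset of $\N$ is closed in $\N_{a_n}$, since a finite sum is finite; in particular $\N_{a_n}$ is $T_1$ and the preimage of a point under a continuous map is closed. Let $X$ be compact, connected, and Hausdorff, let $f:X\to\N_{a_n}$ be continuous, and put $S=f(X)\subseteq\N$. The family $\{f^{-1}(s):s\in S\}$ is a partition of $X$ into nonempty closed sets. If $S$ is finite, each $f^{-1}(s)$ has complement equal to a finite union of closed sets and is therefore also open, so connectedness of $X$ forces $|S|=1$. If $S$ is infinite it is countably infinite, so $X$ is exhibited as a countable union of two or more pairwise disjoint nonempty closed sets, which is impossible for a Hausdorff continuum by Sierpi\'nski's decomposition theorem; hence $f$ is constant. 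The path-connected case then reduces to this one: if $X$ is path-connected and $f:X\to\N_{a_n}$ is continuous, then for any path $\gamma:[0,1]\to X$ the composite $f\circ\gamma$ is a continuous map from the continuum $[0,1]$ into $\N_{a_n}$, hence constant, so $f$ is constant on path components and therefore constant.

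The main obstacle is precisely the infinite-image case above: it rests entirely on the decomposition theorem asserting that a continuum cannot be written as a countable union of two or more pairwise disjoint nonempty closed subsets, and one must be careful to invoke the version valid for all Hausdorff continua (see, e.g., Engelking's \emph{General Topology}) rather than only the classical metric statement. Granting that input, everything else---the hyperconnectedness computation, the $T_1$ observation, and the reduction of the path-connected case to the interval---is routine.
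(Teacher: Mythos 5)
Your proposal is correct, but the two halves fare differently against the paper. For maps \emph{into} $\N_{a_n}$ you take essentially the paper's route: fibers of points are closed, so a continuous map from a continuum gives a countable partition of $X$ into disjoint closed sets, which Sierpi\'nski's theorem (valid for Hausdorff continua) collapses to a single piece, and the path-connected case reduces to $[0,1]$; your separate treatment of the finite-image case is harmless but unnecessary, since Sierpi\'nski covers it. For maps \emph{out of} $\N_{a_n}$ your argument is genuinely different from the paper's. You observe that divergence of $\sum_\N a_n$ makes $\N_{a_n}$ hyperconnected (any two nonempty open sets meet, because their complements are proper closed sets whose union still carries a convergent sum and so cannot be all of $\N$), and then a nonconstant map into a Hausdorff space would pull back disjoint neighborhoods to disjoint nonempty open sets. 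The paper instead argues that $\N_{a_n}$ is connected, that a connected subset of a metrizable space with more than one point must have cardinality at least $|\R|$ (by choosing a radius not realized as a distance), and that a countable connected image is therefore a point. Your version is more elementary and strictly stronger -- it needs only that $Y$ is Hausdorff, whereas the paper's cardinality argument genuinely uses a metric (there do exist countably infinite connected Hausdorff spaces, so the paper's "countable connected implies a point" step would fail without metrizability). The paper's approach, in exchange, records the independently interesting fact about cardinalities of connected sets in metric spaces. Both are complete proofs of the stated theorem.
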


We also investigate continuous maps $\N_{a_n}\rightarrow \N_{b_n}$.  Our main theorem finds sufficient conditions for $\N_{a_n}$ and $\N_{b_n}$ to have distinct homeomorphism types.

\begin{theorem}\label{thm:homeo}
Suppose $\sum_\N a_n$ and $\sum_\N b_n$ are both series consisting of positive terms and assume $\lim_{n\rightarrow \infty} a_n =\lim_{n\rightarrow\infty} \frac{a_n}{b_n} = 0$ and that $\sum_\N b_n $ diverges.  Then $\N_{a_n}$ and $\N_{b_n}$ are not homeomorphic.

\end{theorem}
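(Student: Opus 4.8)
The plan is to prove the slightly more convenient statement that there is no continuous bijection $\N_{b_n}\to\N_{a_n}$. This suffices: a homeomorphism $\N_{a_n}\to\N_{b_n}$ would have a continuous inverse, which in particular would be a continuous bijection $\N_{b_n}\to\N_{a_n}$. (One expects the stronger assertion that \emph{every} continuous map $\N_{b_n}\to\N_{a_n}$ is constant — matching the behaviour of the family $\N_\alpha$ mentioned in the abstract — but only the bijective case is needed here.) Two quick reductions come first. Since $a_n/b_n\to 0$ we have $a_n\le b_n$ for all large $n$, so $\sum_A b_n<\infty$ implies $\sum_A a_n<\infty$; hence every closed set of $\N_{b_n}$ is closed in $\N_{a_n}$, i.e.\ the identity $\N_{a_n}\to\N_{b_n}$ is continuous, and $\N_{a_n}$ carries the finer topology. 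Also $\N_{c_n}$ is discrete iff $\sum_\N c_n<\infty$, since a singleton is open exactly when its (summable) complement is closed. Thus if $\sum_\N a_n<\infty$, then $\N_{a_n}$ is discrete while $\N_{b_n}$ is connected by Theorem~\ref{thm:conv}; a continuous bijection $\N_{b_n}\to\N_{a_n}$ would then have connected image equal to the disconnected space $\N_{a_n}$, which is absurd. So from now on assume $\sum_\N a_n=\infty$.

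Now suppose $g:\N_{b_n}\to\N_{a_n}$ is a continuous bijection and set $h=g^{-1}:\N\to\N$. Continuity of $g$ says that for every proper closed $C\subseteq\N_{a_n}$ — equivalently every $C$ with $\sum_C a_n<\infty$, which forces $C\ne\N$ because $\sum_\N a_n=\infty$ — the set $g^{-1}(C)=h(C)$ is closed in $\N_{b_n}$, i.e.\ $\sum_{m\in h(C)}b_m<\infty$; reindexing the sum over $h(C)$ by $n\in C$ via $m=h(n)$, this is $\sum_{n\in C}b_{h(n)}<\infty$. So it is enough to prove the following combinatorial fact: for every bijection $h:\N\to\N$ there is a set $A\subseteq\N$ with $\sum_{n\in A}a_n<\infty$ and $\sum_{n\in A}b_{h(n)}=\infty$.

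Fix $h$. I would build $A=\bigsqcup_{k\ge 1}A_k$ with the $A_k$ finite and pairwise disjoint, $\sum_{n\in A_k}a_n\le 2^{-k}$, and $\sum_{n\in A_k}b_{h(n)}\ge 1$; then $\sum_A a_n\le 1$ while $\sum_{n\in A}b_{h(n)}=\infty$. Given $A_1,\dots,A_{k-1}$ with finite union $F$, this reduces to: for every finite $F\subseteq\N$ and every $\varepsilon>0$ there is a finite $A'\subseteq\N\setminus F$ with $\sum_{A'}a_n\le\varepsilon$ and $\sum_{n\in A'}b_{h(n)}\ge 1$. Note the total ``value'' available is still infinite, $\sum_{n\in\N\setminus F}b_{h(n)}=\sum_{m\notin h(F)}b_m=\infty$, while $a_n\to 0$ makes individual ``costs'' arbitrarily small. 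The only obstruction is that $h$ might have routed the large values $b_{h(n)}$ precisely to indices $n$ where $a_n$ is also large, and this is exactly where $a_n/b_n\to 0$ is used: group the indices of $\N\setminus F$ into geometric bands by the size of $a_n$ (dually, of $b_n$), handle the few indices carrying non-small $b$-mass directly (only finitely many indices carry a given amount of $b$-value, and $a_n\to 0$ makes each eventually cheap), and for the remaining small-$b_n$ indices use $a_n\le\varepsilon_n b_n$ with $\varepsilon_n\to 0$ to harvest a unit of value at cost $O(\varepsilon_n)$; a Hall-type/counting argument then shows a bijection cannot concentrate the whole infinite value-budget onto the (few, eventually cheap) expensive indices. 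I expect this construction of $A'$ for an adversarial $h$ to be the main obstacle: since $\sum_\N a_n$ and $\sum_\N b_n$ both diverge with $a_n,b_n\to 0$, one cannot simply take a tail, and it is the quantitative strength of $a_n/b_n\to 0$ — not merely $a_n\to 0$ together with divergence of $\sum b_n$ — that must be leveraged against an arbitrary permutation of the indices.
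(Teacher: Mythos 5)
Your reduction is sound: ruling out a continuous bijection $\N_{b_n}\to\N_{a_n}$ does suffice, the discrete case $\sum_\N a_n<\infty$ is correctly dispatched, and the combinatorial target you isolate --- for every bijection $h$ find $A$ with $\sum_{n\in A}a_n<\infty$ and $\sum_{n\in A}b_{h(n)}=\infty$ --- is exactly the fact the paper proves (with $f=h^{-1}$ it is the paper's set $A$ with $\sum_{f(A)}a_n<\infty$ and $\sum_A b_n=\infty$). But the proof stops precisely where the real work begins. The inductive step you need --- for every finite $F$ and $\varepsilon>0$ a finite $A'\subseteq\N\setminus F$ with $\sum_{A'}a_n\le\varepsilon$ and $\sum_{n\in A'}b_{h(n)}\ge 1$ --- is only sketched, and the sketch leans on ``$a_n\le\varepsilon_n b_n$'' at a common index $n$, whereas the sum you must force to diverge pairs $a_n$ with $b_{h(n)}$ at the \emph{permuted} index. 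The hypothesis $a_n/b_n\to 0$ says nothing directly about $a_n/b_{h(n)}$, and an adversarial $h$ can indeed route every large $b$-value to an index where $a$ is still comparatively large; the ``Hall-type/counting argument'' that is supposed to rule this out is exactly the missing lemma, and as written the naive version fails (taking all $n$ with $a_n<\delta$ and extracting unit $b\circ h$-mass gives a set whose $a$-cost is only bounded by $\delta$ times its cardinality, which is uncontrolled).

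The paper closes this gap with two specific ingredients you do not have. First, since $a_n\to 0$ one may reorder so that $(a_n)$ is non-increasing; then for \emph{any} bijection $f$ the rearrangement inequality $\sum_{i\le n}a_{f(i)}\le\sum_{i\le n}a_i=S_n$ holds, which is the only available handle on $a$ evaluated at permuted indices. Second, Lemma \ref{lem:partialsum} upgrades $a_n/b_n\to 0$ to $S_n/T_n\to 0$ for the partial sums. Combining these, Lemma \ref{lem:Meps} shows that the set $M_\epsilon=\{i:a_{f(i)}<\epsilon b_i\}$ --- the set of indices where the permuted cost--value ratio is actually small --- carries divergent $b$-mass; one then harvests the finite blocks $A_m\subseteq M_{2^{-m}}$ with $\sum_{A_m}b_i$ just above $1$ (minimality of $A_m$ plus $a_{f(i)}<2^{-m}b_i$ bounds $\sum_{A_m}a_{f(i)}$ by $2^{-m+1}$). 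Without an argument that the analogue of $M_\epsilon$ is large --- which is where $\lim a_n=0$, the monotone rearrangement, and $S_n/T_n\to 0$ all enter --- your construction of the blocks $A_k$ cannot get started, so the proposal as it stands does not constitute a proof.
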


By considering $a_n = \frac{1}{n^p}$ and $b_n = \frac{1}{n^q}$ for $0\leq p < q \leq 1$, we show, as a simple corollary, that this theorem gives $|\mathbb{R}|$-many homeomorphism types among the $\N_{a_n}$

In fact, Theorem \ref{thm:homeo} can be strengthened in the case where $a_n = \frac{1}{n^p}$ and $b_n = \frac{1}{n^q}$ for $p\in[0,1], p<q$.

\begin{theorem}\label{thm:constant}  Suppose $0\leq p \leq 1$ and that $p < q$.  Then the only continuous functions $\N_{1/n^p}\rightarrow \N_{1/n^q}$ are constant.
\end{theorem}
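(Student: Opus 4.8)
The plan is to argue by contradiction. Suppose $f\colon\N_{1/n^p}\to\N_{1/n^q}$ is continuous but not constant, and write $S=f(\mathbb N)$, $A_m=f^{-1}(m)$ for $m\in S$, and $s_m=\sum_{A_m}1/n^p$. Since each singleton $\{m\}$ is closed in $\N_{1/n^q}$, continuity makes each $A_m$ closed in $\N_{1/n^p}$; as $f$ is not constant, $A_m\subsetneq\mathbb N$, so $s_m<\infty$. The $A_m$ partition $\mathbb N$, so $\sum_{m\in S}s_m=\sum_n 1/n^p=\infty$ (the series diverges because $p\le 1$), and in particular $S$ is infinite. Applying continuity to the closed proper set $\{1,\dots,M\}$ shows moreover that $\sigma_M:=\sum_{m\in S,\,m\le M}s_m=\sum_{n:\,f(n)\le M}1/n^p<\infty$ for every $M$. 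The key consequence of continuity is this: if $C\subseteq S$, $C\ne S$, and $\sum_{m\in C}1/m^q<\infty$, then $f^{-1}(C)$ is a proper closed subset of $\N_{1/n^p}$, so $\sum_{m\in C}s_m=\sum_{n\in f^{-1}(C)}1/n^p<\infty$. Call a set $C$ for which this fails \emph{bad}; the whole task is to produce one.

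Two cases are easy. If $\sum_{m\in S}1/m^q<\infty$ (in particular if $q>1$, when $\N_{1/n^q}$ is even discrete), then $C=S\setminus\{m_0\}$ is bad for any $m_0\in S$, since $\sum_{m\in C}1/m^q<\infty$ but $\sum_{m\in C}s_m=\big(\sum_{m\in S}s_m\big)-s_{m_0}=\infty$. If $p=0$, then $\N_1$ carries the cofinite topology, whose only closed sets are the finite sets and $\mathbb N$; pick $m_1<m_2<\cdots$ in $S$ with $m_k\ge 2^{k/q}$ (possible as $S$ is infinite), and let $C=\{m_k:k\ge 2\}$. Then $C$ is closed in $\N_{1/n^q}$, but $f^{-1}(C)=\bigcup_{k\ge 2}A_{m_k}$ is an infinite proper subset of $\mathbb N$, hence not closed in $\N_1$; so $C$ is bad. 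Henceforth $0<p<q\le 1$ and $\sum_{m\in S}1/m^q=\infty$.

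In this case I would use a growth-rate obstruction. A first consequence of ``no bad set'' is that $\{m\in S:s_m\ge\varepsilon\}$ is finite for every $\varepsilon>0$: if it were infinite it would contain a $q$-summable infinite subset $C$ (a sparse subsequence), necessarily $\ne S$ since $S$ is not $q$-summable, and then $\sum_{m\in C}s_m\ge\varepsilon\,|C|=\infty$, a bad set. In particular $s_m\to 0$ along $S$. Now, because $n\in A_{f(n)}$ gives $s_{f(n)}\ge 1/n^p$, the set $\{n\le N\}$ lies in $f^{-1}(Q_N)$ where $Q_N=\{m\in S:s_m\ge 1/N^p\}$ is finite, so
\[
\frac{N^{1-p}}{1-p}\;\sim\;\sum_{n\le N}\frac1{n^p}\;\le\;\sum_{m\in Q_N}s_m .
\]
The second, harder input is the estimate that ``no bad set'' forces $s_m\le K/m^q$ for all but finitely many $m\in S$. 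Granting this: since $n\in A_m$ gives $1/n^p\le s_m\le K/m^q$, one obtains $f(n)\le Cn^{p/q}$ for large $n$, hence $\max Q_N\le CN^{p/q}$, so
\[
\sum_{m\in Q_N}s_m\;\le\;\sum_{m\in S,\;m\le CN^{p/q}}\!\!s_m\;\le\;K\!\!\sum_{m\le CN^{p/q}}\frac1{m^q}\;=\;\Theta\!\big(N^{\,p(1-q)/q}\big)
\]
(with $\log N$ replacing the power when $q=1$). Combined with the previous display this gives $N^{1-p}=O\!\big(N^{\,p(1-q)/q}\big)$ for all large $N$, i.e.\ $1-p\le p(1-q)/q$, i.e.\ $q\le p$, contradicting $p<q$. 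Hence no non-constant continuous $f$ exists.

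The real work is the boxed-in estimate $s_m\le K/m^q$ (equivalently $\sigma_M=O(M^{1-q})$), and this is the step I expect to be the main obstacle. The idea is a dyadic harvesting argument run contrapositively: if the bound failed, then on infinitely many index-blocks $[2^{k-1},2^k)$ the total weight $\sum_{m\in[2^{k-1},2^k)}s_m$ would be large relative to $\sum_{m\in[2^{k-1},2^k)}1/m^q\approx 2^{(k-1)(1-q)}$; selecting from each such block the $\approx 2^{(k-1)q}/k^2$ indices carrying the most $s$-weight keeps $\sum_k\sum_{C_k}1/m^q$ finite while forcing $\sum_k\sum_{C_k}s_m$ to diverge, so $\bigcup_k C_k$ is bad. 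Carrying this out rigorously — in particular handling blocks on which $f$ is nearly surjective, and playing off $\sum_{m\in S}s_m=\infty$ against the already-excluded possibility $\sum_{m\in S}1/m^q<\infty$ — is where the estimates get delicate; the exact asymptotics $\sum_{n\le N}1/n^p\sim N^{1-p}/(1-p)$ and $\sum_{m\le M}1/m^q\sim M^{1-q}/(1-q)$ special to these series are what should make the bookkeeping feasible.
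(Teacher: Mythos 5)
Your reduction to producing a ``bad'' set $C$ (one with $\sum_{C}1/m^q<\infty$ but $\sum_{C}s_m=\infty$), your two easy cases, and the preliminary observations ($S$ infinite, $s_m\to0$, $\sum_{n\le N}1/n^p\le\sum_{Q_N}s_m$) are all correct and parallel steps in the paper. But the argument collapses at exactly the step you flag as the main obstacle: the claim that the absence of a bad set forces $s_m\le K/m^q$ for all but finitely many $m\in S$. That implication is not merely unproven; it is false for weight sequences satisfying everything you have established up to that point. Take $s_m=1/m^q$ for all $m$ outside a sparse set $\{m_k\}$ and $s_{m_k}=k/m_k^q$ with $m_k$ growing fast enough that $\sum_k k/m_k^q<\infty$. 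Then $s_m\to0$, $\sum_S s_m=\sum_S 1/m^q=\infty$, and for every $C$ one has $\sum_C s_m\le\sum_C 1/m^q+\sum_k k/m_k^q$, so no bad set exists --- yet $m^q s_m$ is unbounded. Hence no dyadic harvesting from ``no bad set'' alone can deliver the pointwise bound; you would have to exploit the fact that $s_m=\sum_{f^{-1}(m)}1/n^p$ comes from a partition of $\N$, which your sketch never does. (Also, the parenthetical claim that the pointwise bound is ``equivalent'' to $\sigma_M=O(M^{1-q})$ is wrong, and your derivation of $f(n)\le Cn^{p/q}$ genuinely needs the pointwise version, not the averaged one.)

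The paper closes this step by a different mechanism that does use the partition structure, and in the opposite direction: since $x\mapsto x^{1/p}$ is super-additive for $0<p\le1$, one gets $s_m^{1/p}\ge\sum_{f^{-1}(m)}1/n$, so $\sum_m s_m^{1/p}$ diverges; discarding the $m$ with $s_m^{1/p}\le 1/m^r$ for a fixed $r\in(1,q/p)$ leaves a set $C$ on which $\sum_C s_m$ still diverges and on which the pointwise \emph{lower} bound $s_m>1/m^{rp}$ holds, whence $(1/m^q)/s_m\to0$ along $C$. An elementary two-series lemma (if $\sum b_n=\infty$, $b_n\to0$ and $a_n/b_n\to0$, there is an index set on which $\sum b_n$ diverges while $\sum a_n$ converges) then manufactures the bad set directly. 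Replacing your unprovable upper bound with this lower-bound-plus-lemma route is the missing idea; the rest of your write-up could then be kept essentially as is.
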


Often when initially learning topology, one questions whether a continuous bijection is necessarily a homeomorphism.  Of course, there are simple counterexamples to this conjecture, e.g., $e^{i\theta}:[0,2\pi)\rightarrow S^1$, but Theorem \ref{thm:constant} provides an interesting uncountable family of counterexamples:  the identity function $\N_{1/n^q}\rightarrow \N_{1/n^p}$ is a continuous bijection, but the only continuous functions $\N_{1/n^p}\rightarrow \N_{1/n^q}$ are constants.

We also find an interesting uncountable family of spaces with diversity one.  Recall that a topological space is said to have \textit{diversity one} or to have \textit{homeomorphic open sets} if any two-nonempty open sets are homeomorphic.  Such topological spaces have been previously studied \cite{FrRa}, under the added restriction that the underlying topology is Hausdorff.

\begin{theorem}\label{thm:hos}
Suppose $\inf \{a_n\} = 0$ and $\sum_\N a_n$ diverges.  Then every non-empty open subset of $\N_{a_n}$ is homeomorphic to $\N_{a_n}$.

\end{theorem}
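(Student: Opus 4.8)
The plan is to describe the open sets of $\N_{a_n}$ and the subspace topology they carry, and then to write down an explicit homeomorphism from $\N_{a_n}$ onto each non-empty open set; diversity one follows immediately.

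First I would record the easy structural facts. Since $\sum_\N a_n$ diverges, $\N$ itself is not a finite-sum set, so $U\subseteq\N$ is open exactly when $U=\emptyset$ or $S:=\N\setminus U$ is a proper subset with $\sum_{n\in S}a_n<\infty$; moreover a non-empty open $U$ must be infinite, for otherwise $S$ would be cofinite and hence have divergent sum. We may assume $U\neq\N$ (the case $U=\N$ being trivial), so that $S$ is a non-empty finite-sum set. Next I would check that the subspace topology on $U$ is again ``a series topology'': $V\subseteq U$ is closed in $U$ iff $V=U$ or $\sum_{n\in V}a_n<\infty$. One direction uses that every closed set of $\N_{a_n}$ other than $\N$ is finite-sum, hence so is its intersection with $U$; the other uses that a finite-sum $V\subsetneq U$ is already closed in $\N_{a_n}$. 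In particular $\sum_{n\in U}a_n=\sum_\N a_n-\sum_{n\in S}a_n$ still diverges, which is consistent with $U$ being connected via Theorem~\ref{thm:conv}.

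The heart of the proof is the construction of a homeomorphism $h:\N_{a_n}\to U$. Using $\inf\{a_n\}=0$, I would first produce an \emph{infinite} set $S_0\subseteq U$ with $\sum_{n\in S_0}a_n<\infty$. Then, fixing any bijection $g:S\cup S_0\to S_0$ (both sides being countably infinite), define $h$ to be the identity on $U\setminus S_0$ and to equal $g$ on $S\cup S_0$; the partitions $\N=(U\setminus S_0)\sqcup(S\cup S_0)$ and $U=(U\setminus S_0)\sqcup S_0$ show that $h$ is a well-defined bijection $\N\to U$. By the description of closed sets above, $h$ is a homeomorphism as soon as one shows $\sum_{n\in A}a_n<\infty\iff\sum_{n\in h(A)}a_n<\infty$ for every $A\subseteq\N$ (the equivalence $A=\N\iff h(A)=U$ being automatic). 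Writing $A=A'\sqcup A''$ with $A'=A\cap(U\setminus S_0)$ and $A''=A\cap(S\cup S_0)$, we have $h(A)=A'\sqcup g(A'')$; since $A''\subseteq S\cup S_0$ and $g(A'')\subseteq S_0$ both lie inside finite-sum sets, $\sum_{n\in A}a_n$ and $\sum_{n\in h(A)}a_n$ are each finite if and only if $\sum_{n\in A'}a_n<\infty$, which gives the claim.

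I expect the genuinely delicate point to be the very first step of the last paragraph: exhibiting the infinite finite-sum subset $S_0$ \emph{inside the prescribed open set} $U$ (equivalently, verifying $\inf_{n\in U}a_n=0$), since this is precisely where the hypotheses $\inf\{a_n\}=0$ and $\sum_\N a_n=\infty$ must be combined. Once $S_0$ is in hand, the rest is purely combinatorial bookkeeping; and since the same construction applies to every non-empty open $U$, it also shows directly that any two non-empty open subsets of $\N_{a_n}$ are homeomorphic, i.e.\ that $\N_{a_n}$ has diversity one.
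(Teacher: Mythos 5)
Your construction is, in substance, the paper's own argument carried out by hand: the paper lists $U=\{u_1<u_2<\cdots\}$, sets $b_n=a_{u_n}$, and invokes Proposition~\ref{suffhomeo}(3), whose proof is exactly your device of absorbing the summable complement $S=U^c$ into an infinite summable subset of $U$ via a bijection. Granting the set $S_0$, your bookkeeping is correct and complete: since $S\cup S_0$ and $S_0$ are both summable, whether $\sum_A a_n$ or $\sum_{h(A)}a_n$ converges is governed entirely by $A\cap(U\setminus S_0)$, on which $h$ is the identity, so $h$ is a homeomorphism onto $U$ with its subspace (series) topology. Your description of that subspace topology is also right.

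The step you deferred, however, is not merely delicate --- it does not follow from the stated hypotheses, and this is a genuine gap. Take $a_n=1$ for $n$ odd and $a_n=2^{-n}$ for $n$ even. Then $\inf\{a_n\}=0$ and $\sum_\N a_n$ diverges, and $U=\{1,3,5,\dots\}$ is a non-empty open set (its complement is summable); but $a_n=1$ for every $n\in U$, so $U$ contains no infinite summable subset and no $S_0$ exists. In fact the conclusion of the theorem fails for this $U$: its subspace topology is cofinite, whereas $\N_{a_n}$ has an infinite proper closed subset (the even numbers), and ``possessing an infinite proper closed subset'' is a homeomorphism invariant. The paper's proof founders at the same spot --- Proposition~\ref{suffhomeo}(3) requires $\N_{b_n}$ to be non-cofinite, i.e.\ $\inf_{n\in U}a_n=0$, and this is never verified --- so you have correctly located the crux; it simply cannot be supplied. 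What is actually needed is that every non-empty open set contain an infinite closed set, equivalently that $\sum_{\{n:\,a_n<\epsilon\}}a_n$ diverge for every $\epsilon>0$; this holds in particular whenever $\lim_{n\rightarrow\infty}a_n=0$, in which case Lemma~\ref{lem:closedsub} applied to the terms indexed by $U$ produces your $S_0$ and the rest of your argument goes through verbatim.
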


Theorems \ref{thm:homeo}, \ref{thm:constant}, and \ref{thm:hos} can be deduced from more general results in \cite{KwTr} and \cite{Mr}.  We choose to include proofs of these theorems for two reasons.  First, our proofs use simpler tools.  In fact, they employing nothing beyond the curriculum of a standard American calculus II course.  Second, the papers \cite{KwTr} and \cite{Mr} are intended for set theorist.  While the intersection of topologists and set theorists is large, we hope that by expressing the results in topological language, they become more widely accessible.

The outline of this paper is as follows.  In Section \ref{sec:basic}, we establish the connection between analytic properties of the series and basic topological properties (compactness, connectedness, etc), proving Theorems \ref{thm:conv} and \ref{thm:comp}.  In Section \ref{sec:cont}, we establish properties about continuous maps between ``nice spaces'' and $\N_{a_n}$, proving Theorem \ref{thm:nicecont}.  Finally, in Section \ref{sec:four}, we study continuous functions $\N_{a_n}\rightarrow \N_{b_n}$, proving Theorems \ref{thm:homeo}, \ref{thm:constant}, and \ref{thm:hos}.

We would like to acknowledge support from the Bill and Roberta Blankenship Undergraduate Research Endowment. We would also like to thank Alexey Lebedev and Georgiy Shevchenko for several proofs and ideas appearing in Section \ref{sec:four}.  Finally, we would like to thank an anonymous referee who informed us that the main theorems in Section \ref{sec:four} are special cases of theorems found in \cite{KwTr} and \cite{Mr}.

\section{Basic topological properties}\label{sec:basic}

As mentioned in the introduction, given any series of positive terms $\sum_\N a_n$, one can construct a topology on $\N$ by declaring a subset $A\subseteq \N$ to be closed iff $\sum_A a_n < \infty$ or $A = \N$.  The restriction that all $a_n$ be positive serves two purposes.  First, it means that the convergence of a series of the form $\sum_{A} a_n$ for $A\subseteq \N$ is independent of the order we take the sum.  Second, as the following example shows, allowing a mixture of infinitely many positive and negative terms can lead to cases where the above prescription does not actually define a topology.

\begin{proposition}  The set $\left\{A\subseteq \N: \sum_A \frac{(-1)^{n+1}}{n} < \infty\right\}\cup\{\N\}$ is not a topology of closed sets on $\N$.

\end{proposition}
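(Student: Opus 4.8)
The plan is to show that the collection
$\mathcal{C} := \left\{A \subseteq \N : \sum_A \frac{(-1)^{n+1}}{n}\text{ converges}\right\} \cup \{\N\}$
fails to be closed under (even countable, even decreasing) intersections, which already rules it out as a topology of closed sets. I would deliberately not hunt for a counterexample among finite unions: by a De Morgan argument such a failure amounts in any case to an intersection failure for the complementary sets, and the cleanest witnesses to the latter are cofinite sets, so intersections are where to look.

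The first step is to record that deleting finitely many terms from a convergent series leaves it convergent. Since $\sum_{n=1}^\infty \frac{(-1)^{n+1}}{n}$ converges by the alternating series test, for every finite $F\subseteq\N$ we have $\sum_{\N\setminus F}\frac{(-1)^{n+1}}{n} = \ln 2 - \sum_{n\in F}\frac{(-1)^{n+1}}{n}\in\R$; hence every cofinite proper subset of $\N$ belongs to $\mathcal{C}$.

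The second step is the construction. For each $k\ge 1$ let $C_k := \N\setminus\{1,3,5,\dots,2k-1\}$, the complement of the first $k$ odd numbers. Each $C_k$ is a cofinite proper subset of $\N$, so $C_k\in\mathcal{C}$ by the first step, and $C_1\supseteq C_2\supseteq\cdots$. But $\bigcap_{k\ge1}C_k$ is the set $E=\{2,4,6,\dots\}$ of even positive integers, and here I would compute $\sum_E \frac{(-1)^{n+1}}{n} = \sum_{m\ge1}\frac{(-1)^{2m+1}}{2m} = -\tfrac12\sum_{m\ge1}\frac1m = -\infty$, so this series diverges; since also $E\ne\N$, we get $E\notin\mathcal{C}$. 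Thus an intersection of members of $\mathcal{C}$ lies outside $\mathcal{C}$, and $\mathcal{C}$ is not a topology of closed sets.

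The only genuine obstacle is conceptual and occurs at the very start: one must realize that the obstruction lives among infinite intersections rather than finite unions, and that it is powered by the conditional (non-absolute) convergence of the alternating harmonic series — this is precisely what lets a ``large'' set with divergent sum, namely $E$, be exhibited as an intersection of sets whose complements are finite (hence whose sums converge). Once that is seen, the remaining verification is the one-line harmonic-series computation above.
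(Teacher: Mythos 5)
Your proof is correct under the intended reading and takes essentially the same approach as the paper: a decreasing chain of cofinite sets (hence in the collection) whose intersection is a residue class mod $2$ on which the series diverges. The paper's version is the mirror image of yours --- it deletes the even numbers $\{2,4,\dots,2n\}$ so that the intersection is the odd numbers, where $\sum_O \frac{(-1)^{n+1}}{n} = \sum \frac{1}{2n+1} = +\infty$. That choice of sign is the one substantive difference: your intersection $E$ has $\sum_E \frac{(-1)^{n+1}}{n} = -\infty$, which under a literal reading of the condition ``$\sum_A \frac{(-1)^{n+1}}{n} < \infty$'' would still place $E$ in the collection (since $-\infty < \infty$); you sidestep this by silently replacing ``$<\infty$'' with ``converges,'' whereas landing on the odd numbers makes the counterexample work under either reading.
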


\begin{proof}
For each $n\in \N$, let $A_n$ denote the set $\N\setminus\{2, 4, 6, ..., 2n\}$.  Then $\sum_{A_n} a_n < \infty$, because $A_n$ differs from $\N$ only on a finite set and $\sum_\N a_n = \ln(2) < \infty$.

On the other hand $\bigcap_{n\in \N} A_n$ is precisely the odd numbers $O$, and $$\sum_{O} a_n = \sum_\N \frac{1}{2n+1} = \infty.$$

\end{proof}

Of course, the above proof can be adapted to any conditionally convergent series.  Henceforth, all the terms in any sum will be positive real numbers.

We begin with some basic topological properties of $\N_{a_n}$.

\begin{proposition}\label{prop:basicproperties}
For any series of positive terms $\suminfty$, the topological space  $\N_{a_n}$ has the following properties.

\begin{enumerate}  \item  Points are closed.

\item If $B\subseteq A\subsetneq\N$ with $A$ closed, then $B$ is closed.

\item  For any $A\subseteq \N$, either $\overline{A} = A$ or $\overline{A} = \N$.

\end{enumerate}
\end{proposition}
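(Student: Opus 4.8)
The plan is to verify each of the three items directly from the definition of the topology, recalling that a set $A \subseteq \N$ is closed precisely when $\sum_A a_n < \infty$ or $A = \N$.

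\textbf{Item 1.} For any $m \in \N$, the singleton $\{m\}$ satisfies $\sum_{\{m\}} a_n = a_m < \infty$, so $\{m\}$ is closed by definition. Hence points are closed. (Note this already shows $\N_{a_n}$ is a $T_1$ space.)

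\textbf{Item 2.} This is essentially the sub-additivity observation already made in the introduction. Suppose $B \subseteq A \subsetneq \N$ with $A$ closed. Since $A \ne \N$, closedness of $A$ means $\sum_A a_n < \infty$. Because all terms are positive, $\sum_B a_n \le \sum_A a_n < \infty$, so $B$ is closed. (One should be slightly careful to observe that the conclusion ``$B$ is closed'' holds regardless of whether $B = \N$, but since $B \subseteq A \subsetneq \N$ we automatically have $B \ne \N$, so the relevant clause of the definition is the finite-sum one.)

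\textbf{Item 3.} Recall $\overline{A}$ is the smallest closed set containing $A$. Since $A \subseteq \N$ and $\N$ is closed, $\overline{A} \subseteq \N$, so certainly $\overline{A} = A$ or $\overline{A} \subsetneq \N$ is the only dichotomy to rule on — more precisely, I will show that if $\overline{A} \ne \N$ then $\overline{A} = A$. So suppose $\overline{A} \subsetneq \N$. Then $\overline{A}$ is a closed proper subset of $\N$, hence $\sum_{\overline{A}} a_n < \infty$. By Item 2 (or directly), every subset of $\overline{A}$ is closed; in particular $A$ itself is closed, since $A \subseteq \overline{A} \subsetneq \N$. But $\overline{A}$ is the smallest closed set containing $A$, so $\overline{A} \subseteq A$, and combined with $A \subseteq \overline{A}$ we get $\overline{A} = A$. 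This completes the proof.

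\textbf{Main obstacle.} There is no real obstacle here; these are immediate consequences of the definition and of the positivity of the terms. The only point requiring a moment's care is the bookkeeping in Item 3 around the clause ``$A = \N$'' in the definition of a closed set — one must note that a proper closed set has all of its subsets closed (Item 2), so that the closure operation cannot ``add'' any points unless it jumps all the way to $\N$. I would present Items 1 and 2 in one or two sentences each and spend the bulk of the (short) write-up on the dichotomy argument for Item 3.
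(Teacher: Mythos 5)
Your proof is correct and follows essentially the same route as the paper: item 1 via the finiteness of a single term, item 2 via monotonicity of positive-term sums, and item 3 by noting that if $\overline{A}\neq\N$ then $\sum_{\overline{A}}a_n<\infty$ forces $A$ itself to be closed. The extra bookkeeping you note around the clause $A=\N$ is harmless but not needed beyond what the paper already records.
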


\begin{proof}  First, $(1)$ follows because every finite sum automatically converges.

For $(2)$, let $B\subseteq A\subsetneq \N$ with $A$ closed.  Then $\sum_B a_n \leq \sum_A a_n < \infty$, so $B$ is closed as well.

For $(3)$, if $\overline{A} \neq \N$, then $\sum_A a_n \leq \sum_{\overline{A}} a_n < \infty$, so $A$ is closed, that is, $A = \overline{A}$.

\end{proof}

In fact, the second and third properties in Proposition \ref{prop:basicproperties} are equivalent in any topological space, as shown in the next proposition.

\begin{proposition}  Suppose $X$ is a topological space.  Then $X$ has the property that every subset is closed or dense iff $X$ has the property that every subset of a proper closed set is closed.

\end{proposition}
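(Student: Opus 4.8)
The plan is to prove both implications directly, translating each hypothesis into a statement about closures. Write $(\star)$ for ``every subset of $X$ is closed or dense'' and $(\dagger)$ for ``every subset of a proper closed set is closed''. The key observation linking the two is the elementary fact that for any $A \subseteq X$, the closure $\overline{A}$ is the smallest closed set containing $A$, so $A$ is dense exactly when $\overline{A} = X$, and $A$ is contained in a proper closed set exactly when $\overline{A} \neq X$.

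First I would show $(\star) \Rightarrow (\dagger)$. Suppose $C \subsetneq X$ is a proper closed set and $B \subseteq C$. By $(\star)$, $B$ is either closed or dense. If $B$ is dense then $\overline{B} = X$; but $\overline{B} \subseteq \overline{C} = C \subsetneq X$, a contradiction. Hence $B$ is closed, which is exactly $(\dagger)$.

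Next I would show $(\dagger) \Rightarrow (\star)$. Let $A \subseteq X$ be arbitrary. If $\overline{A} = X$, then $A$ is dense and we are done. Otherwise $\overline{A} \subsetneq X$ is a proper closed set containing $A$, so by $(\dagger)$, $A$ is closed. Thus every subset is closed or dense, establishing $(\star)$.

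Since neither direction requires more than the defining property of the closure operator, I do not anticipate a genuine obstacle; the only thing to be careful about is the edge case $X = \emptyset$ or $X$ a one-point space, where there are no proper closed sets containing a nonempty subset, but in those cases both properties hold vacuously (or trivially), so the equivalence is unaffected. The crux, such as it is, is simply recognizing that ``dense'' and ``contained in a proper closed set'' are complementary via the closure.
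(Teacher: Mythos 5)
Your proof is correct and follows essentially the same route as the paper's: both directions are handled by the observation that a set is dense iff its closure is all of $X$, and otherwise its closure is a proper closed set containing it. No further comment is needed.
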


\begin{proof}
Assume initially that the closure of every set is itself or $X$.  Let $A\subsetneq X$ be a closed set and let $B\subseteq A$ be arbitrary.  Then, $\overline{B}\subseteq A$, so $\overline{B}\neq X$.  By assumption, $\overline{B} = B$ so $B$ is closed.

Conversely, assume every subset of a proper closed set is closed and let $A\subseteq X$ be arbitrary.  If $\overline{A}\neq X$, then $\overline{A}$ is a proper closed set.  Since $A\subseteq \overline{A}$, $A$ must be closed.

\end{proof}

By taking complements, it follows immediately that supersets of non-empty open sets are open, and, equivalently, that the interior of any subset is either itself or empty.

\

We now show that many familiar topological properties are equivalent to natural analytic properties.  We begin with a characterization of the discrete topology, which in particular, proves Theorem \ref{thm:conv}.

\begin{theorem}\label{thm:disc} The following are equivalent.

\begin{enumerate}
\item $\sum_\N a_n$ converges.

\item $\N_{a_n}$ is discrete.

\item  $\N_{a_n}$ is disconnected.

\item  $\N_{a_n}$ is Hausdorff

\end{enumerate}

\end{theorem}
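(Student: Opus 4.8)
The plan is to prove the cycle $(1)\Rightarrow(2)\Rightarrow(3)\Rightarrow(4)\Rightarrow(1)$, or rather a convenient set of implications that closes up; I expect $(1)\Rightarrow(2)$ and $(4)\Rightarrow(1)$ to carry the real content, with the remaining links being essentially formal. For $(1)\Rightarrow(2)$: if $\sum_\N a_n$ converges, then \emph{every} subset $A\subseteq\N$ satisfies $\sum_A a_n \le \sum_\N a_n < \infty$, so every proper subset is closed, and $\N$ itself is closed by fiat; hence every subset is closed and $\N_{a_n}$ is discrete. For $(2)\Rightarrow(3)$: a discrete space on a set with more than one point is obviously disconnected (e.g. $\{1\}$ and $\N\setminus\{1\}$ are both open), so this is immediate. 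For $(2)\Rightarrow(4)$ (or $(3)\Rightarrow(4)$, whichever is cleaner): discrete spaces are Hausdorff, again immediate; so it really suffices to get from discreteness to Hausdorff and back to convergence, or to run $(3)\Rightarrow(1)$ directly.

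The substantive direction is proving that a \emph{topological} hypothesis forces convergence. I would prove the contrapositive: assume $\sum_\N a_n$ diverges and show $\N_{a_n}$ is connected (which, since connected spaces with more than one point are never discrete and a two-point-or-more connected space fails Hausdorff, simultaneously rules out $(2)$, $(3)$, and $(4)$). Suppose for contradiction that $\N = U \sqcup V$ with $U,V$ nonempty, open, and disjoint. Then $U$ and $V$ are both closed, and they are both proper (each is the complement of the other nonempty set), so $\sum_U a_n < \infty$ and $\sum_V a_n < \infty$; but then $\sum_\N a_n = \sum_U a_n + \sum_V a_n < \infty$, contradicting divergence. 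This establishes the implication ``$\sum_\N a_n$ diverges $\Rightarrow$ $\N_{a_n}$ connected,'' whose contrapositive is ``disconnected $\Rightarrow$ convergent,'' i.e. $(3)\Rightarrow(1)$.

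For the Hausdorff link, to get $(4)\Rightarrow(1)$ directly: if $\N_{a_n}$ is Hausdorff, pick distinct points $m,n$ and disjoint open sets $U\ni m$, $V\ni n$. As above $U,V$ are proper open sets, hence their complements are proper closed sets with convergent sums; in particular $\N\setminus U$ is closed and proper, so $\sum_{\N\setminus U} a_n < \infty$, and since $U$ is disjoint from $V$ we have $U\subseteq \N\setminus V$, so also $\sum_U a_n \le \sum_{\N\setminus V} a_n < \infty$; then $\sum_\N a_n = \sum_U a_n + \sum_{\N\setminus U} a_n < \infty$. Combining: $(1)\Rightarrow(2)\Rightarrow(3)$ and $(1)\Rightarrow(2)\Rightarrow(4)$ trivially, $(3)\Rightarrow(1)$ and $(4)\Rightarrow(1)$ by the complement arguments above, so all four are equivalent.

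The main obstacle, such as it is, is purely organizational: choosing which implications to prove so that every arrow's proof is a one-liner about complements of proper open/closed sets, and being careful that the "proper" hypothesis is genuinely available (it is, because in each case we have produced two disjoint nonempty sets, so neither can be all of $\N$). No analytic subtlety beyond the superadditivity/monotonicity of sums of positive terms already recorded in Section~\ref{sec:intro} is needed; in particular positivity of the $a_n$ is what guarantees $\sum_\N a_n = \sum_U a_n + \sum_{\N\setminus U} a_n$ with no rearrangement worries.
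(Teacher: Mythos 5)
Your proposal is correct and takes essentially the same route as the paper: $(1)\Rightarrow(2)$ by monotonicity of sums of positive terms, the trivial implications out of discreteness, and then deducing convergence from disconnectedness or Hausdorffness by exhibiting two proper closed sets covering $\N$ (complements of the relevant open sets) and adding their convergent sums. One caution: your parenthetical claim that a connected space with more than one point cannot be Hausdorff is false in general (e.g.\ $\R$); fortunately nothing depends on it, since you prove $(4)\Rightarrow(1)$ directly by the complement argument.
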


\begin{proof}
To show $(1)\implies (2),$  Suppose $\sum_\N a_n$ converges and let $A\subseteq \N$ be arbitrary.  Then $\sum_A a_n \leq \sum_\N a_n < \infty$, so every set is closed.  This shows $\N_{a_n}$ is discrete.

$(2)\implies (4)$ is obvious, as is $(2)\implies (3)$ for any set with more than one point.

To see that $(3)\implies (1)$ and $(4)\implies(1)$, we note that both conditions $(3)$ and $(4)$ imply there are two non-empty proper closed $A_1, A_2$ for which $A_1\cup A_2 = \N$.  For (3), one can obtain $A_1$ and $A_2$ as complements of any two disconnecting open sets.  For $(4)$, one can obtain $A_1$ and $A_2$ as complements of any two disjoint proper open sets.  Because both $A_i$ are proper, $\sum_{A_i} a_n < \infty$.  Thus, $\sum_{\N} a_n \leq \sum_{A_1} a_n + \sum_{A_2} a_n < \infty$, so the series converges.

\end{proof}

We now give a similar characterization of when the topology on $\N$ is the cofinite topology, which will encompass Theorem \ref{thm:comp}.

\begin{theorem}\label{thm:cof}  The following are equivalent.

\begin{enumerate}
\item  $\inf \{a_n\} > 0$

\item  $\N_{a_n}$ is the cofinite topology.

\item  $\N_{a_n}$ is compact.
\end{enumerate}

\end{theorem}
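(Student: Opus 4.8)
\textbf{Proof proposal for Theorem \ref{thm:cof}.}

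The plan is to prove the cycle of implications $(1)\implies(2)\implies(3)\implies(1)$, exploiting the structure of closed sets established in Proposition \ref{prop:basicproperties}.

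First I would show $(1)\implies(2)$. Suppose $\inf\{a_n\} = \delta > 0$. If $A\subseteq\N$ is infinite, then $\sum_A a_n \geq \sum_A \delta = \infty$, so $A$ is not closed unless $A = \N$. Conversely, every finite set is closed since finite sums converge. Hence the closed sets are exactly the finite sets together with $\N$, which is precisely the cofinite topology.

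Next, $(2)\implies(3)$ is the standard fact that any set equipped with the cofinite topology is compact: given an open cover, pick any one nonempty member $U$; its complement is finite, and each of those finitely many points is covered by some member of the cover, yielding a finite subcover. For $(3)\implies(1)$ I would argue by contrapositive. Suppose $\inf\{a_n\} = 0$. Then I can extract a subsequence $a_{n_k}$ with $a_{n_k} < 2^{-k}$, so that $\sum_k a_{n_k} < \infty$; in other words, the set $S = \{n_k : k \in \N\}$ is an infinite closed set whose complement $\N \setminus S$ is also infinite (and proper, hence also... well, we need it open). Actually the cleanest route: since $S$ is closed and proper, $\N\setminus S$ is open; and $\N\setminus S$ is infinite. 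Now I build a cover with no finite subcover. For each $n\in S$, the singleton $\{n\}$ is open? No — singletons need not be open here. Instead, observe that $\N\setminus S$ is an infinite open set, and for each $m \in \N\setminus S$, the set $(\N\setminus S)\cup\{m\}$ is... that doesn't immediately help either. Let me reconsider: partition $S$ itself. Write $S$ as a disjoint union of infinitely many infinite sets $S = \bigsqcup_j T_j$; each $\N\setminus T_j$ is then... not necessarily closed. Hmm.

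The cleanest approach to $(3)\implies(1)$, which I expect to be the main obstacle, is this: assuming $\inf\{a_n\}=0$, choose the subsequence $n_1 < n_2 < \cdots$ with $a_{n_k} < 2^{-k}$. For each $j\geq 1$ let $F_j = \{n_j, n_{j+1}, n_{j+2}, \dots\}$, the tail of the subsequence. Since $\sum_{k\geq j} a_{n_k} \leq 2^{-j+1} < \infty$, each $F_j$ is a proper closed set, so each $U_j := \N\setminus F_j$ is open. Also $U_1 \subseteq U_2 \subseteq \cdots$ and $\bigcup_j U_j = \N$ because $\bigcap_j F_j = \emptyset$. Thus $\{U_j\}_{j\in\N}$ is an open cover; but no finite subcollection covers $\N$, since $U_1\cup\cdots\cup U_J = U_J = \N\setminus F_J \neq \N$. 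Therefore $\N_{a_n}$ is not compact, completing the contrapositive and closing the cycle. The only subtlety to watch is confirming each $F_j$ is a \emph{proper} subset of $\N$ (it is, since $n_j - 1 \notin F_j$ when $n_j \geq 2$, and we may always arrange $n_1 \geq 2$), which is needed for $F_j$ to count as closed in this topology.
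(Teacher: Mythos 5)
Your proposal is correct and follows essentially the same route as the paper: the same argument for $(1)\implies(2)$ and $(2)\implies(3)$, and for $(3)\implies(1)$ the same open cover, since your $U_j = \N\setminus\{n_j, n_{j+1},\dots\}$ coincides with the paper's $A^c\cup\{n_1,\dots,n_{j-1}\}$ built from an infinite closed set extracted via a summable subsequence (the paper uses $a_{n_k}\leq 1/k^2$ where you use $2^{-k}$, an immaterial difference). The exploratory dead-ends in the middle of your write-up should be deleted, but the final argument is complete.
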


Before proving this, we need a lemma which will be used again in Section \ref{sec:four}.

\begin{lemma}\label{lem:closedsub}Suppose $\inf\{ a_n\} = 0$.  Then there is an infinite closed set.

\end{lemma}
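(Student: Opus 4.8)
The goal is to produce an infinite proper subset $A \subsetneq \N$ with $\sum_A a_n < \infty$; such an $A$ is closed by definition of the topology. The plan is to build $A$ as a sufficiently sparse subsequence of indices along which $a_n$ decays geometrically.

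The first step is to observe that the hypothesis $\inf\{a_n\} = 0$ forces, for every $\varepsilon > 0$, the set $S_\varepsilon := \{n \in \N : a_n < \varepsilon\}$ to be \emph{infinite}. Indeed, if some $S_{\varepsilon_0}$ were finite and nonempty, then $m := \min\{a_n : n \in S_{\varepsilon_0}\} > 0$, while $a_n \geq \varepsilon_0$ for all $n \notin S_{\varepsilon_0}$; hence $a_n \geq \min\{m,\varepsilon_0\} > 0$ for all $n$, contradicting $\inf\{a_n\} = 0$. (And $S_{\varepsilon_0}$ is nonempty for every $\varepsilon_0 > 0$, again by definition of the infimum.)

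The second step is a routine recursive extraction: pick $n_1 \in S_{1/2}$, and given $n_1 < \cdots < n_{k-1}$, use that $S_{2^{-k}}$ is infinite to choose $n_k \in S_{2^{-k}}$ with $n_k > n_{k-1}$. This yields a strictly increasing sequence $n_1 < n_2 < \cdots$ with $a_{n_k} < 2^{-k}$ for all $k$. Set $A = \{n_k : k \in \N\}$. Then $A$ is infinite and
\[
\sum_A a_n = \sum_{k=1}^\infty a_{n_k} < \sum_{k=1}^\infty 2^{-k} = 1 < \infty .
\]
If $A \neq \N$, then $A$ is the desired infinite closed set. The only loose end is the case $A = \N$: since $(n_k)$ is strictly increasing this forces $n_k = k$, so $\sum_\N a_n < 1 < \infty$, and then by Proposition \ref{prop:basicproperties}(2) (or directly, since every subset is then closed) the proper subset $\N \setminus \{1\}$ is infinite and closed.

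I do not anticipate a serious obstacle here; the one point requiring a little care is precisely the promotion of "$\inf\{a_n\} = 0$" to "$S_\varepsilon$ is infinite for all $\varepsilon$," since merely having $a_n < \varepsilon$ for \emph{some} $n$ is not enough to extract an infinite index set — this is what the finite-positive-minimum argument in the first step supplies. The degenerate case $A = \N$ is handled in one line as above.
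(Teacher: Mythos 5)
Your proof is correct and follows essentially the same route as the paper's: extract a strictly increasing sequence of indices along which $a_n$ is dominated by the terms of a convergent series (you use $2^{-k}$ where the paper uses $1/k^2$). You additionally spell out two points the paper leaves implicit --- that $\inf\{a_n\}=0$ forces each $S_\varepsilon$ to be infinite, and the degenerate case $A=\N$ --- but these are refinements of the same argument, not a different approach.
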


\begin{proof} Because $\inf\{a_n\} = 0$, there is an $n_1\in \N$ with $a_{n_1} \leq \frac{1}{1^2}$.  Continuing inductively, there is an $n_k\in \N$ for which both $n_k > n_{k-1}$ and $a_{n_k} \leq \frac{1}{k^2}$.  Setting $A = \{n_1, ... ,n_k, ...\}$, we see that $\sum_A a_n \leq \sum_\N \frac{1}{k^2} < \infty$.  Thus, $A\subseteq \N_{a_n}$ is closed.

\end{proof}

We now prove Theorem \ref{thm:cof}.  
\begin{proof}
For $(1)\implies (2)$, let $L = \inf\{a_n\}$ and assume $L > 0$.  If $A\subseteq \N$ is infinite, then $\sum_A a_n \geq \sum_A L = \infty$, so $A$ is not closed, unless $A = \N$.  Since we have previously shown that finite sets are closed, this is precisely the cofinite topology.

For $(2)\implies (3)$, we note that any non-empty open set can only miss finitely many points.  It follows easily that the cofinite topology on any set is compact.

Finally, we show $(3)\implies (1)$, via the contrapositive.  So, assume $\inf \{a_n\}$ is not greater than $0$.  Since the terms $a_n$ are all positive, this implies $\inf \{a_n\} = 0$.  By Lemma \ref{lem:closedsub}, there is an infinite closed subset $A = \{n_1,n_2,...n_k,...\}$.

Now, for each $i\in \N$, we let $U_i = A^c \cup\{n_1,..., n_i\}$.  We claim $\{U_i\}$ forms an open cover of $\N_{a_n}$ with no finite subcover.  To see that each $U_i$ is open, simply note that $U_i^c = A\setminus\{n_1,...,n_i\}$ is a subset of the closed set $A$, so is closed.  Further, the $U_i$ cover $\N$ because for each $n_i\in A$, $n_i\in U_i$ and for $n\notin A$, $n\in U_1$.

On the other hand, if $\{U_{i_1}, ... U_{i_k}\}$ is a finite collection of the $U_i$ and $l > \max\{i_1,..., i_k\}$, then $n_{l} \notin U_{i_1}\cup....\cup U_{i_k}$.  So, there is no finite subcover.

\end{proof}

\section{Continuous functions to and from ``nice'' spaces}\label{sec:cont}

In this section, we study continuous functions between ``nice'' spaces and an $\N_{a_n}$, proving Theorem \ref{thm:nicecont}.  In fact, Propositions \ref{prop:XtoN} and \ref{prop:NtoX} together are equivalent to Theorem \ref{thm:nicecont}.

We begin this section with a characterization of continuous functions $f:X\rightarrow \mathbb{N}_{a_n}$ where $X$ is a continuum or $X$ is path-connected.  We recall that a topological space is called a \textit{continuum} if it is compact, connected, and Hausdorff.

\begin{proposition}\label{prop:XtoN}Suppose $\sum_\N a_n$ is a series of positive terms.  If $X$ is a continuum or path-connected, then, any continuous function $f:X\rightarrow \N_{a_n}$ is constant.

\end{proposition}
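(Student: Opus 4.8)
The key structural fact I would use is Proposition \ref{prop:basicproperties}(3): in $\N_{a_n}$, the closure of any set is either itself or all of $\N$. Equivalently (taking complements), every non-empty open set is dense. This means $\N_{a_n}$ is very far from Hausdorff unless it is discrete, and in fact every continuous image of a connected space sitting inside $\N_{a_n}$ should be forced to a point. So the plan is to exploit denseness of open sets together with the fact that points are closed.

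\textbf{Case 1: $X$ a continuum.} Suppose $f:X\to\N_{a_n}$ is continuous and non-constant, so its image contains two distinct points $m\neq n$. Since points are closed in $\N_{a_n}$ (Proposition \ref{prop:basicproperties}(1)), $f^{-1}(n)$ is a non-empty proper closed subset of $X$. I would like to also produce a disjoint non-empty closed set whose union with $f^{-1}(n)$ is everything, contradicting connectedness — but that is too much to hope for directly. Instead I would argue: the complement $X\setminus f^{-1}(n)$ is a non-empty open set, hence (since $X$ is compact Hausdorff, in particular regular — actually normal) we can separate. Hmm — the cleaner route: consider the set $A=\{k\in\N : \sum_{j\le k,\, j\in \operatorname{im} f} a_j\}$… no. Let me instead use the following. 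Pick any point $n\in\operatorname{im}f$ and write $C=f^{-1}(n)$, a proper non-empty closed subset of $X$. I claim $f^{-1}(\{n\})$ must actually be clopen, which contradicts connectedness. To see it is open: its complement is $f^{-1}(\N\setminus\{n\})$; I want $\{n\}$ to be open in the subspace $\operatorname{im}f$. This follows if $\operatorname{im}f$ is finite, because in $\N_{a_n}$ every finite set carries the discrete topology (a finite set is closed and so is each of its subsets, by Proposition \ref{prop:basicproperties}(2), hence each point is open in it). So the real content is: \emph{the image of a continuum (or path-connected space) in $\N_{a_n}$ is finite}. If the image is finite, it is discrete, so $f$ is a continuous map from a connected space onto a finite discrete space, forcing the image to be a single point.

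\textbf{Proving the image is finite.} Suppose $\operatorname{im}f$ is infinite; pick a countably infinite subset $S=\{m_1,m_2,\dots\}\subseteq\operatorname{im}f$. For each $k$, the set $T_k=\{m_{k+1},m_{k+2},\dots\}$ is either closed in $\N_{a_n}$ or dense. Here is where I need to be careful and where I expect the main obstacle to lie: I want to build a descending chain of open preimages with empty intersection, or an infinite open cover with no finite subcover, violating compactness. Concretely, set $U_k=f^{-1}(\N\setminus\{m_1,\dots,m_k\})$. Each $\N\setminus\{m_1,\dots,m_k\}$ is open (cofinite-ish: its complement is finite, hence closed), so each $U_k$ is open in $X$, the $U_k$ form an increasing cover of $X$ (every point of $X$ maps somewhere, and misses only finitely many $m_i$), yet no finite subcollection covers $X$ since each $m_i\in\operatorname{im}f$ has a preimage point not in $U_1\cup\cdots\cup U_k$ for $k<i$. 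This contradicts compactness of $X$ in Case 1. For the path-connected case, I would instead restrict to a path $\gamma:[0,1]\to X$ whose endpoints map to two distinct points of the image; $[0,1]$ is a continuum, so by the case already done $f\circ\gamma$ is constant, contradicting the choice of endpoints. Hence in both cases $f$ is constant.

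The step I expect to be the genuine obstacle is cleanly ruling out an infinite image in the compact case — the open cover $\{U_k\}$ argument above is the natural candidate and essentially re-runs the proof of Theorem \ref{thm:cof}'s $(3)\!\Rightarrow\!(1)$ direction, but one must double-check that the $U_k$ really do cover $X$ and that the "no finite subcover" claim uses only that the $m_i$ lie in the image (it does: pick $x_i\in f^{-1}(m_i)$). An alternative, possibly slicker, approach for the continuum case: use that a continuum is not the union of countably many pairwise disjoint non-empty closed sets (Sierpiński's theorem) — but that is a heavier tool than the authors seem to want, so I would stick with the elementary open-cover argument. Once the image is known to be finite hence discrete, connectedness of $X$ finishes it immediately.
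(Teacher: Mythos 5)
Your reduction of the path-connected case to the continuum case via paths is exactly what the paper does, and your observation that a finite subset of $\N_{a_n}$ is discrete (so a finite image plus connectedness of $X$ forces constancy) is correct. The gap is precisely in the step you flagged: the open-cover argument that the image cannot be infinite. The sets $U_k=f^{-1}(\N\setminus\{m_1,\dots,m_k\})$ are \emph{decreasing}, not increasing, and they do not cover $X$: any point of $f^{-1}(m_1)$ (non-empty, since $m_1\in f(X)$) lies in no $U_k$, because $m_1\in\{m_1,\dots,m_k\}$ for every $k$. In fact $\bigcup_k U_k=U_1=f^{-1}(\N\setminus\{m_1\})\subsetneq X$, so no contradiction with compactness is available from this family.

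The natural repair, imitating the $(3)\Rightarrow(1)$ direction of Theorem \ref{thm:cof}, is to use the increasing sets $V_k=f^{-1}\bigl((\N\setminus S)\cup\{m_1,\dots,m_k\}\bigr)$, whose complements are the preimages of the tails $T_k=\{m_{k+1},m_{k+2},\dots\}$; these do cover $X$ and admit no finite subcover. But now openness of $V_k$ requires each tail $T_k$ to be closed in $\N_{a_n}$, i.e.\ it requires an infinite $S\subseteq f(X)$ with $\sum_S a_n<\infty$. Such an $S$ exists exactly when $\inf\{a_n:n\in f(X)\}=0$. If instead the terms are bounded below on the image (e.g.\ $a_n=1$ for all $n$), the image inherits the cofinite topology, in which every infinite subset is dense, connected, and compact, so neither compactness nor connectedness of $f(X)$ gives any traction, and no elementary cover argument presents itself. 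The paper does not find one either: it invokes exactly the Sierpi\'nski theorem you set aside as too heavy --- a continuum cannot be written as a disjoint union of countably many (more than one) non-empty closed sets --- applied to $X=\coprod_{n}f^{-1}(n)$, which disposes of all cases at once. Your argument, once corrected as above, covers only the case where $f(X)$ contains an infinite closed set, not the general statement.
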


In fact, the result holds more generally (with the same proof) if $\mathbb{N}_{a_n}$ is replaced with any countable topological space for which points are closed.

\begin{proof}  Assume initially that $X$ is a continuum.  As discussed previously, in any series topology, finite sets are closed.  Now, since $f$ is continuous, for each $n\in \mathbb{N}$, $f^{-1}(n)$ is a closed subset of $X$.  Thus, we may write $X$ as a countable disjoint union of closed sets: $X = \coprod_{n\in\mathbb{N}} f^{-1}(n)$.  According to Sierpinski \cite{Sier}, this is only possible if at most one of the closed sets is non-empty.  That is, $f$ is constant.

Now, assume $X$ is path connected and $f:X\rightarrow \N_{a_n}$ is continuous.  Let $p,q\in X$ be arbitrary and let $\gamma:[0,1]\rightarrow X$ be a curve with $\gamma(0) = p$ and $\gamma(1) = q$.  Then, $f\circ \gamma$ is continuous and $[0,1]$ is a continuum, $f\circ \gamma$ must be constant.  Thus, $f(p) = f(q)$ as claimed.  It follows that $f$ is constant.

\end{proof}

In particular, each $\N_{a_n}$ is totally path disconnected.  This contrasts with Theorem \ref{thm:conv}, which asserts that $\N_{a_n}$ is connected if $\sum_\N a_n$ diverges.

We now change focus to the case where the domain is an $\N_{a_n}$.  If $\sum_\N a_n$ converges, then the induced topology is discrete (Theorem \ref{thm:disc}), so any function $f:\mathbb{N}\rightarrow Y$ is continuous for any topological space $Y$.  In particular, there are many non-constant continuous functions $f:\N_{a_n}\rightarrow Y$.  Nonetheless, we now show this behavior is limited to convergent series.

\begin{proposition}\label{prop:NtoX}
Suppose $\mathbb{N}_{a_n}$ is associated to a divergent series $\sum_\N a_n$.  If $X$ is metrizable, then any continuous function $f:\mathbb{N}\rightarrow X$ is constant.
\end{proposition}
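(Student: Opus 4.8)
The plan is to argue by contradiction: suppose $f:\N_{a_n}\rightarrow X$ is continuous and takes at least two distinct values $y_0 \neq y_1$ in the metrizable space $X$. The key observation is that in $\N_{a_n}$, whenever $\sum_\N a_n$ diverges, the closed sets are very restricted — by Proposition \ref{prop:basicproperties}(3), every subset is either closed or dense, and a proper subset $A$ is closed precisely when $\sum_A a_n < \infty$. Since the whole series diverges, $\N$ cannot be split into two proper closed sets, so at least one of any two complementary subsets is dense. I would exploit this to show that preimages of the two points $y_0, y_1$ cannot be separated.

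Concretely, first I would pick, using the metric on $X$, a radius $r>0$ with $d(y_0,y_1) > 2r$, so the open balls $B(y_0,r)$ and $B(y_1,r)$ are disjoint. Set $A_0 = f^{-1}(B(y_0,r))$ and $A_1 = f^{-1}(B(y_1,r))$; these are disjoint nonempty open subsets of $\N_{a_n}$. The next step is the heart of the matter: a nonempty open set in $\N_{a_n}$ has closed complement, and since the two open sets $A_0, A_1$ are disjoint, $A_0 \subseteq A_1^c$, so $A_0$ is a subset of a proper closed set, hence $A_0$ is closed by Proposition \ref{prop:basicproperties}(2); likewise $A_1$ is closed. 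So $A_0$ and $A_1$ are both clopen proper nonempty subsets, meaning $\sum_{A_0} a_n < \infty$ and $\sum_{A_1} a_n < \infty$.

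Now I would take the remaining set $C = \N \setminus (A_0 \cup A_1)$. Since $A_0 \cup A_1$ is a union of two proper closed sets it is closed, and it is proper (otherwise $\sum_\N a_n = \sum_{A_0} a_n + \sum_{A_1}a_n < \infty$, contradicting divergence), so $C$ is a nonempty open set. But $C = f^{-1}(X \setminus (B(y_0,r)\cup B(y_1,r)))$, and more to the point, $A_0 = f^{-1}(B(y_0,r))$ being clopen means $A_0^c \supseteq A_1$ is also clopen — in fact I only need that $A_0$ is a proper clopen set with $\sum_{A_0^c} a_n$ forced to diverge (since $\sum_\N a_n$ diverges and $\sum_{A_0} a_n < \infty$). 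That contradicts $A_0^c$ being closed. So no such $f$ exists, i.e., $f$ is constant.

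The main obstacle — really the only subtle point — is making sure the chain ``$A_0$ open $\Rightarrow$ $A_0 \subseteq A_1^c$ closed $\Rightarrow$ $A_0$ closed $\Rightarrow$ $A_0^c$ is closed $\Rightarrow$ $\sum_{A_0^c} a_n < \infty$'' does not secretly require $A_0$ to be proper when we invoke ``$A_0^c$ closed,'' and to handle the degenerate cases cleanly: if $A_0 = \N$ then $f$ is constant equal to a point of $B(y_0,r)$ and we need a further argument, but in fact if $A_0 = \N$ then $A_1 = \emptyset$, contradicting $y_1 \in \mathrm{image}(f)$; and if $A_0 = \emptyset$ that likewise contradicts $y_0 \in \mathrm{image}(f)$. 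Once those edge cases are dispatched, everything reduces to the single arithmetic fact that a divergent series cannot be written as a finite sum of convergent subseries, which is immediate.
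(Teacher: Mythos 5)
Your proof is correct, and it takes a genuinely different route from the paper's. The paper first invokes Theorem \ref{thm:disc} to conclude that $\N_{a_n}$ is connected, and then proves a general fact about metric spaces: a connected subset with more than one point must have cardinality at least $|\R|$ (one picks a radius $r$ avoided by every distance $d(c_1,c)$ and disconnects with the corresponding balls), so a countable connected image is a single point. You instead work entirely inside $\N_{a_n}$: two values $y_0\neq y_1$ give disjoint nonempty open preimages $A_0, A_1$; then $A_0\subseteq A_1^c$ with $A_1^c$ a proper closed set forces $A_0$ to be closed by Proposition \ref{prop:basicproperties}(2), so $A_0$ is a nonempty proper clopen set and $\sum_\N a_n = \sum_{A_0}a_n + \sum_{A_0^c}a_n < \infty$, contradicting divergence. (The detour through $C = \N\setminus(A_0\cup A_1)$ is unnecessary, as you observe yourself; the clean version is exactly the chain you isolate at the end of your second paragraph, and your edge cases $A_0=\emptyset$ or $A_0=\N$ are dispatched correctly.) What your argument really establishes is that $\N_{a_n}$ is hyperconnected --- any two nonempty open sets meet --- and the only property of $X$ you use is that distinct points admit disjoint neighborhoods. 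So your proof in fact yields the stronger statement that every continuous map from $\N_{a_n}$ (with $\sum_\N a_n$ divergent) to an arbitrary Hausdorff space is constant, whereas the paper's route records instead the general cardinality fact about connected subsets of metric spaces. Both are valid; yours is the more elementary and the more general for this particular proposition.
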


\begin{proof}

By Theorem \ref{thm:conv}, the topology on $\mathbb{N}$ is connected, so $f(\mathbb{N})$ is a connected subset of $X$.  We recall that a connected set in a metrizable space is either a single point, or has cardinality at least that of $\mathbb{R}$.  Indeed, suppose $C$ is connected with $1 < |C|<|\mathbb{R}|$, and let $c_1,c_2 \in C$ be distinct.   Let $d:X\times X\rightarrow \mathbb{R}$ be any compatible metric.  Then, because the interval $[0,d(c_1,c_2))$ has the cardinality of $\mathbb{R}$, there must be an $r\in(0, d(c_1,c_2))$ for which $d(c_1,c)\neq r$ for any $c\in C$.  Then $U = \{c\in C: d(c,c_1)<r\}$ and $V = \{c\in C: d(p,c_1) > r\}$ disconnect $C$.

So, as $f(\N)$, is connected and countable, it must consist of a single point.

\end{proof}

\section{Continuous functions \texorpdfstring{$\N_{a_n}\rightarrow \N_{b_n}$}{Nan -> Nbn}}\label{sec:four}

We begin this section with the following observation about continuous maps $\N_{a_n}\rightarrow \N_{b_n}$.

\begin{proposition}\label{prop:NtoN}
Suppose $f:\N_{a_n}\rightarrow \N_{b_n}$ is continuous and $\sum_\N a_n $ diverges.  If $f$ is non-constant, then the image of $f$ is dense.  In particular, $f(\mathbb{N})$ must be an infinite subset of $\mathbb{N}$.

\end{proposition}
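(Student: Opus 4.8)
The plan is to argue by contradiction, reducing non-density of the image to a violation of connectedness of $\N_{a_n}$ (Theorem \ref{thm:conv}, which applies since $\sum_\N a_n$ diverges). So suppose $f$ is non-constant but $f(\N)$ is \emph{not} dense in $\N_{b_n}$.

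First I would unpack what non-density means in this setting. By Proposition \ref{prop:basicproperties}(3), the closure of $f(\N)$ is either $f(\N)$ itself or all of $\N$; hence $f(\N)$ failing to be dense forces $f(\N)$ to be a proper closed set, i.e.\ $\sum_{f(\N)} b_n < \infty$. The key observation is that this makes the subspace topology $\N_{b_n}$ induces on $f(\N)$ \emph{discrete}: for any $B \subseteq f(\N)$ we have $\sum_B b_n \le \sum_{f(\N)} b_n < \infty$, and since $f(\N) \subsetneq \N$ every such $B$ is a proper subset of $\N$, hence closed in $\N_{b_n}$; so every subset of $f(\N)$ is closed in the subspace topology.

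Next I would corestrict $f$ to a continuous surjection $\N_{a_n} \to f(\N)$ (with the subspace topology). Since $\sum_\N a_n$ diverges, Theorem \ref{thm:conv} tells us $\N_{a_n}$ is connected, so its continuous image $f(\N)$ is connected as well. But a discrete space that is connected has at most one point, contradicting that $f$ is non-constant. This establishes that $f(\N)$ is dense. For the final sentence, I would note that any dense subset $D$ of $\N_{b_n}$ must be infinite: a finite $D$ satisfies $\sum_D b_n < \infty$ and is a proper subset of $\N$, hence is closed and equal to its own closure, so not dense. Applying this to $D = f(\N)$ finishes the proof.

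This proof involves no real obstacle; the one point deserving care — really an insight rather than a difficulty — is verifying that $\sum_{f(\N)} b_n < \infty$ genuinely yields the discrete subspace topology, which relies both on convergence of the $b$-sum over \emph{every} subset of $f(\N)$ and on all these subsets being proper in $\N$ (so that they are honestly closed, not merely members of the closed-set collection by being equal to $\N$).
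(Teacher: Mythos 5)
Your proof is correct and follows essentially the same route as the paper's: divergence gives connectedness of $\N_{a_n}$ via Theorem \ref{thm:conv}, the closed-or-dense dichotomy of Proposition \ref{prop:basicproperties} reduces non-density to $f(\N)$ being a proper closed set, and the resulting discreteness of the subspace topology contradicts connectedness of the image. The only difference is cosmetic (you run the contrapositive and spell out why a dense subset must be infinite, which the paper leaves implicit).
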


\begin{proof}This is obvious if $f$ is surjective, so we assume $f(\N)\subsetneq \N_{b_n}$.  By Theorem  \ref{thm:disc}, since $\sum_\N a_n$ diverges, $\N_{a_n}$ is connected.  Now, if $f(\N)\subseteq \N_{b_n}$ is closed, then in the subspace topology, the connected set $f(\N)$ is discrete, so must consist of a single point.  In particular, if $f$ is non-constant, $f(\N)$ cannot be closed.  But, we showed in Proposition \ref{prop:basicproperties} that in any series topology, a subset is either closed or dense.

\end{proof}

We now work towards providing partial results characterizing the homeomorphism type of $\N_{a_n}$.  Our first proposition provides some simple sufficient conditions which guarantee $\N_{a_n}$ is homeomorphic to $\N_{b_n}$.

\begin{proposition}\label{suffhomeo}  $\N_{a_n}$ and $\N_{b_n}$ are homeomorphic if any of the following conditions is satisfied.

\begin{enumerate}
\item  There is a bijection $\sigma:\N\rightarrow \N$ for which $a_n = b_{\sigma(n)}$ for all $n$.

\item  The limit $\lim_{n\rightarrow \infty} \frac{a_n}{b_n}$ exists and is positive.

\item  The space $\N_{b_n}$ is not cofinite, and $a_{2n-1} = b_n$, while $\sum_\N a_{2n}$ converges.

\end{enumerate} 

\end{proposition}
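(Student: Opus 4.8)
The plan is to prove each of the three sufficient conditions in turn, in each case by exhibiting an explicit homeomorphism $\N_{a_n} \to \N_{b_n}$. The key observation throughout is that a bijection $g: \N \to \N$ is a homeomorphism $\N_{a_n} \to \N_{b_n}$ precisely when it induces a bijection between the closed sets, i.e. when for every proper $A \subsetneq \N$, $\sum_A a_n < \infty \iff \sum_{g(A)} b_n < \infty$; and since convergence of a positive series is insensitive to the order of summation, this is a purely ``set-theoretic'' condition on which subsets are summable.

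For (1), I would take $g = \sigma^{-1}$ (or $\sigma$, depending on orientation conventions): since $a_n = b_{\sigma(n)}$, for any $A \subseteq \N$ we have $\sum_{n \in A} a_n = \sum_{n \in A} b_{\sigma(n)} = \sum_{m \in \sigma(A)} b_m$, so $A$ is summable for $(a_n)$ iff $\sigma(A)$ is summable for $(b_n)$; hence $\sigma$ carries closed sets to closed sets bijectively and is a homeomorphism. For (2), let $L = \lim a_n/b_n > 0$. The plan is to use the identity map $\mathrm{id}: \N \to \N$. By the limit comparison test, for any $A \subseteq \N$, $\sum_A a_n$ converges iff $\sum_A b_n$ converges: indeed there is an $N$ beyond which $\frac{L}{2} b_n \le a_n \le 2L b_n$, and the finitely many terms with $n \le N$ do not affect convergence, so $\sum_A a_n < \infty \iff \sum_{A, n > N} a_n < \infty \iff \sum_{A, n>N} b_n < \infty \iff \sum_A b_n < \infty$. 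Thus the identity is a homeomorphism.

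For (3), this is the only case requiring real work. Here $a_{2n-1} = b_n$ and $\sum_\N a_{2n}$ converges, and $\N_{b_n}$ is not cofinite, so by Theorem~\ref{thm:cof} we have $\inf\{b_n\} = 0$, hence by Lemma~\ref{lem:closedsub} there is an infinite closed set $C = \{m_1 < m_2 < \cdots\} \subseteq \N_{b_n}$, i.e. $\sum_{k} b_{m_k} < \infty$. The idea is to ``make room'' to absorb the even-indexed terms of $(a_n)$ into such a closed set: define a bijection $g: \N \to \N$ that sends the odd numbers $2n-1$ to $n$ (matching $a_{2n-1} = b_n$), and sends the even numbers $2, 4, 6, \ldots$ bijectively onto the set $C$ of indices of $(b_n)$. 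Wait — that does not quite work as a bijection since $C$ is a proper subset; instead I would send the odd numbers bijectively onto $\N \setminus C$ in increasing order (using that $a_{2n-1}$ runs through all of $(b_n)$, reindexed appropriately) and the even numbers bijectively onto $C$ in increasing order. The point of this construction: for any $A \subseteq \N$, write $A = A_{\mathrm{odd}} \cup A_{\mathrm{even}}$. We have $\sum_{A_{\mathrm{even}}} a_n \le \sum_\N a_{2n} < \infty$ always, and $\sum_{g(A_{\mathrm{even}})} b_n \le \sum_C b_n < \infty$ always, so both the even part of $A$ and its image are automatically summable and contribute nothing to the convergence question on either side. Hence $\sum_A a_n < \infty \iff \sum_{A_{\mathrm{odd}}} a_n < \infty$ and $\sum_{g(A)} b_n < \infty \iff \sum_{g(A_{\mathrm{odd}})} b_n < \infty$, and these last two are equivalent because $g$ restricted to the odd numbers realizes a reindexing matching the $a$-values to $b$-values. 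One must be slightly careful that $A = \N$ still corresponds to $g(A) = \N$ (which it does, $g$ being a bijection), so the ``$\N$ is always closed'' clause is respected.

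The main obstacle I anticipate is getting the bookkeeping in case (3) exactly right: namely arranging the bijection $g$ so that simultaneously (a) $g$ maps the evens onto the prescribed closed set $C$, (b) $g$ maps the odds onto the complement $\N \setminus C$, and (c) under $g$, the sequence $(a_{2n-1})_n = (b_n)_n$ gets matched up with $(b_m)_{m \in \N \setminus C}$ in a value-preserving way — but in general $(b_n)_{n \in \N}$ and $(b_m)_{m \in \N\setminus C}$ are not the same multiset, so a direct value-preserving match is impossible. The resolution is that we do \emph{not} need value-preservation: we only need that the odd-indexed subsums of $(a_n)$ behave like subsums of $(b_n)$, which is automatic since $(a_{2n-1}) = (b_n)$ literally as sequences indexed by $n$ — so the correct move is to let $g$ send $2n-1 \mapsto$ (the $n$-th element of $\N \setminus C$) and separately send the evens bijectively to $C$; then for the odd part, $\sum_{A_{\mathrm{odd}}} a_n = \sum_{n : 2n-1 \in A} b_n$, whereas $\sum_{g(A_{\mathrm{odd}})} b_n = \sum_{n: 2n-1 \in A} b_{c_n}$ where $c_n$ is the $n$-th element of $\N\setminus C$ — and these need \emph{not} be equal. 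So in fact the cleaner approach is: first apply case (1) to replace $(b_n)$ by a reindexed copy in which $C$ becomes the evens and $\N \setminus C$ becomes the odds (using the permutation sorting $C$ to the even positions), reducing to the situation where $C = \{2,4,6,\ldots\}$; then $g: 2n-1 \mapsto 2n-1$, $2n \mapsto 2n$ is the identity and the argument above goes through verbatim with $A_{\mathrm{even}}$ automatically summable on both sides. I would present case (3) in this two-step form to keep the combinatorics transparent.
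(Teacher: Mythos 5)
Your proofs of (1) and (2) are correct and essentially identical to the paper's (reindexing bijection for (1), limit comparison plus the identity map for (2)). The gap is in (3), and it is precisely the issue you flagged mid-paragraph but did not actually resolve. After reducing to the case where the closed set $C$ is the set $E$ of even numbers, your final map $g$ is the identity, and you assert that the odd part works because ``$g$ restricted to the odd numbers realizes a reindexing matching the $a$-values to $b$-values.'' It does not: writing $A_{\mathrm{odd}} = \{2n-1 : n\in S\}$, you have $\sum_{A_{\mathrm{odd}}} a_n = \sum_{n\in S} b_n$ but $\sum_{g(A_{\mathrm{odd}})} b_n = \sum_{n\in S} b_{2n-1}$, i.e.\ sums of $b$ over the index sets $S$ and $\{2n-1: n\in S\}$, which need not converge together. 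Concretely, take $b_{2k}=1/k^2$, $b_{2k-1}=1$, $a_{2n-1}=b_n$, $a_{2n}=2^{-n}$ (here $E$ is already closed with infinite complement, so no reindexing is even needed). Then $A=\{3,7,11,\dots\}=\{4k-1\}$ satisfies $\sum_A a_n=\sum_k b_{2k}=\sum_k 1/k^2<\infty$ while $\sum_A b_n=\sum_k b_{4k-1}=\sum_k 1=\infty$, so $A$ is closed in $\N_{a_n}$ but not in $\N_{b_n}$: the identity is not a homeomorphism.

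The repair is to keep the correspondence value-preserving where it matters and to dump everything else into sets of convergent sum on both sides; this is what the paper's proof does. Having arranged (via (1), after shrinking the closed set so its complement is infinite) that $E$ is an infinite closed subset of $\N_{b_n}$, define $\sigma:\N_{b_n}\rightarrow\N_{a_n}$ by $\sigma(n)=2n-1$ for $n$ \emph{odd} only --- so $b_n=a_{\sigma(n)}$ exactly on the complement of $E$, and the image of the odds is $\{m: m\equiv 1\pmod 4\}$ --- and send the evens by an arbitrary bijection $\psi$ onto $D=\{m: m\not\equiv 1\pmod 4\}$. This works because $\sum_E b_n<\infty$ and $\sum_D a_m=\sum_E b_n+\sum_\N a_{2n}<\infty$, so every subset of $E$ and every subset of $D$ is automatically closed, while the complementary parts are matched value by value. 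Your instinct about absorbing the convergent pieces is right; the point you are missing is that the value-preserving assignment $n\mapsto 2n-1$ cannot be imposed on all of $\N$ (it is not surjective, and its inverse $2n-1\mapsto n$ leaves no room for the evens of the domain), nor replaced by the identity --- it must be restricted to $\N\setminus E$, which is exactly what frees up $E$ to absorb $D$.
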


The last condition essentially says that a convergent series and a series $\sum_\N b_n$ with $\inf\{b_n\} = 0$ can be spliced together to give space homeomorphic to $\N_{b_n}$.  Using the first condition, it can be inserted anywhere and in any order.

\begin{proof}
For the first statement, $\sigma:\N_{a_n}\rightarrow \N_{b_n}$ is a homeomorphism.  In more detail, suppose $X\subseteq \N_{b_n}$ is proper.  Since  $$\sum_{\sigma^{-1}(X)} a_n = \sum_{\sigma^{-1}(X)} b_{\sigma(n)} = \sum_{X} b_n,$$ we see that $X\subseteq \N_{b_n}$ is closed iff $\sigma^{-1}(X)\subseteq \N_{a_n}$ is closed.

\

For the second statement, suppose $L = \lim_{n\rightarrow\infty} \frac{a_n}{b_n}$ with $0 < L < \infty$.  Then there is a natural number $N$ with the property that for any $n>N$, $$|\frac{a_n}{b_n} - L| < \frac{L}{2}.$$  In particular, $$\frac{L}{2}b_n < a_n < \frac{3L}{2}b_n$$ for all $n > N$.

We claim the identity function $\N_{a_n}\rightarrow \N_{b_n}$ is a homeomorphism.  Suppose $A\subseteq \N_{a_n}$ is proper and closed.  Decompose $A = A_{0}\cup A_{1}$ with $A_{0} = A\cap \{0,1,....,N\}$ and $A_1 = A\setminus A_0$.  Then $$\sum_{n\in A} b_n = \sum_{ A_0} b_n + \sum_{ A_1} b_n < \sum_{ A_0} b_n + \frac{2}{L}\sum_{ A_1} a_n.$$  The first sum contains only finitely many terms, so converges, and the second is bounded by $$\frac{2}{L}\sum_A a_n <\infty,$$ so $A$ is also closed in $\N_{b_n}$.  This shows the identity map is a closed map.  Interchanging $a_n$ and $b_n$ and using the fact that $a_n < \frac{3L}{2} b_n$ shows the identity is continuous, so it is a homeomorphism.

\

For the last statement, since $\N_{b_n}$ is not cofinite, $\inf\{b_n\} = 0$ (Theorem \ref{thm:cof}), so, by Lemma \ref{lem:closedsub} there is an infinite closed subset of $\N_{b_n}$.  By shrinking this subset, we may assume the complement is infinite.  Then, by (1) of this proposition, we may assume this subset is $E$, the even numbers.  Let $\psi:E\rightarrow D$, where $D=\{n\in \N: n\not\cong 1\pmod{4}\}$, denote any bijection.  Now, our homeomorphism $\sigma:\N_{b_n}\rightarrow \N_{a_n}$ is the map $$\sigma(n) = \begin{cases} 2n-1 & n\text{ odd}\\ \psi(n) & n\text{ even} \end{cases}.$$  To see that $\sigma$ is surjective, note that by the definition of $\psi$, we need only show that $2n-1$, $n$ odd, represents every number which is congruent to $1\pmod{4}$.  But $2n-1 = 4k+1$ is solved by the odd number $n = 2k+1$.  The fact that $\sigma$ is injective follows because both $n\mapsto 2n-1$ and $\psi$ are injective, together with the fact that $2n-1$ is always congruent to $1\pmod{4}$ if $n$ is odd.

We now show that $\sigma$ and $\sigma^{-1}$ are continuous, beginning with $\sigma$.  So, suppose $A\subseteq \N_{a_n}$ is proper and closed.  Decompose $A$ as $A = A_0 \cup A_1$ where $A_0 = A\cap D$ and $A_1 = A\setminus A_0$.  Then, $\sigma^{-1}(A_0) = \psi^{-1}(A_0)\subseteq E$.  By assumption, $\sum_E b_n < \infty$, so $\sum_{\sigma^{-1}(A_0)} b_n \leq \sum_E b_n < \infty$.

Also, we compute that $$\sum_{\sigma^{-1}(A_1)} b_n = \sum_{\sigma^{-1}(A_1)} a_{2n-1} = \sum_{A_1} a_n < \infty.$$  Thus, $$\sum_{\sigma^{-1}(A)} b_n = \sum_{\sigma^{-1}(A_0)} b_n + \sum_{\sigma^{-1}(A_1)} b_n < \infty.$$  This shows that $\sigma^{-1}(A)\subseteq \N_{b_n}$ is closed.

Finally, assume $A\subseteq \N_{b_n}$ is closed and proper.  We must show $\sigma(A)$ is closed.  Decompose $A = A_0\cup A_1$ where $A_0 = A\cap E$ and $A_1 = A\setminus A_0$.  Then $$\sum_{\sigma(A_0)} a_n = \sum_{\psi(A_0)} a_n \leq \sum_{D} a_n = \sum_{E} b_n + \sum_E a_n < \infty.$$

Also, $$\sum_{\sigma(A_1)} a_n = \sum_{A_1} a_{\sigma(n)} =\sum_{A_1} a_{2n-1} =\sum_{A_1} b_n \leq \sum_A b_n <  \infty.$$  Thus, $$\sum_{\sigma(A)} a_n = \sum_{\sigma(A_0)} a_n + \sum_{\sigma(A_1)} a_n < \infty.$$

\end{proof}

Theorem \ref{thm:hos} is a corollary of Proposition \ref{suffhomeo}(3).

\begin{proof}[Proof of Theorem \ref{thm:hos}]
Suppose $U\subseteq \N_{a_n}$ is a non-empty open set.  Then $U^c$ is a proper closed set.  If $U$ is finite, then $\sum_\N a_n = \sum_U a_n + \sum_{U^c} a_n < \infty$, a contradiction, so $U$ must be infinite.  List the elements of $U$ as $U = \{u_1, u_2 ,...\}$ with $u_1 < u_2 < ...$.  Consider the series $\sum_\N b_n$ where $b_n = a_{u_n}$.  Then $\sum_\N a_n$ is obtained from $\sum_\N b_n$ by adjoining the convergent series $\sum_{U^c} a_n$.  Proposition \ref{suffhomeo}(c) now implies that $U$ an $\N_{a_n}$ are homeomorphic.

\end{proof}

\subsection{A proof of Theorem \ref{thm:homeo}}

We now establish Theorem \ref{thm:homeo}, using a proof due to Alexey Lebedev \cite{MSE1}.  For the remainder of this section, we will assume $\lim_{n\rightarrow\infty} a_n = \lim_{n\rightarrow\infty} \frac{a_n}{b_n}=0$ and that $\sum_\N b_n$ diverges.  Notice in this case, that $\{a_n\}$ has a largest element, and a second largest element, etc.  Thus, by Proposition \ref{suffhomeo}(1), we assume without loss of generality that the sequence $\{a_n\}$ is non-increasing.  

We prove Theorem \ref{thm:homeo} via a sequence of lemmas.  We let $S_n = \sum_{i=1}^n a_i$ and $T_n = \sum_{j=1}^n b_j$ be the partial sums.

\begin{lemma}\label{lem:partialsum}We have $\lim_{n\rightarrow\infty} \frac{S_n}{T_n} = 0$.

\end{lemma}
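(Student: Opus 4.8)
The plan is to run the elementary ``$\varepsilon$-version'' of the Stolz--Cesàro theorem, using only the hypothesis $\lim_{n\to\infty} a_n/b_n = 0$ together with the divergence of $\sum_\N b_n$ (the other standing hypotheses of the section will not be needed for this particular lemma). First I would record the one structural fact that makes everything work: since all $b_j > 0$ and $\sum_\N b_n$ diverges, the partial sums $T_n$ are strictly increasing and $T_n \to \infty$.

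Next, fix $\varepsilon > 0$. Because $a_n/b_n \to 0$, there is an $N$ such that $a_n < \tfrac{\varepsilon}{2}\, b_n$ for all $n > N$. Splitting the sum $S_n$ at index $N$, for every $n > N$ we get
\[
S_n = S_N + \sum_{i=N+1}^{n} a_i \;<\; S_N + \frac{\varepsilon}{2}\sum_{i=N+1}^{n} b_i \;\leq\; S_N + \frac{\varepsilon}{2}\, T_n,
\]
and hence $\dfrac{S_n}{T_n} < \dfrac{S_N}{T_n} + \dfrac{\varepsilon}{2}$. Now $S_N$ is a fixed constant while $T_n \to \infty$, so there is $N' \geq N$ with $\dfrac{S_N}{T_n} < \dfrac{\varepsilon}{2}$ for all $n > N'$. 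Combining, $0 < \dfrac{S_n}{T_n} < \varepsilon$ for all $n > N'$ (the lower bound because $S_n, T_n > 0$). Since $\varepsilon$ was arbitrary, $\lim_{n\to\infty} S_n/T_n = 0$.

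The argument has essentially no obstacle; the only point requiring a moment's care is that one must use the divergence of $\sum_\N b_n$ (not merely positivity) to guarantee $T_n \to \infty$, which is exactly what forces the ``prefix'' term $S_N/T_n$ to vanish. I would present the above as the full proof, since it is already short and self-contained.
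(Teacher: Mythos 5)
Your proof is correct and follows essentially the same route as the paper: split $S_n$ at the index $N$ beyond which $a_i < \varepsilon b_i$ (the paper uses $\varepsilon$ where you use $\varepsilon/2$), bound the tail by $\varepsilon T_n$, and let $T_n \to \infty$ kill the fixed prefix $S_N$. No gaps; nothing further is needed.
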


\begin{proof}Because $\lim_{n\rightarrow \infty} \frac{a_n}{b_n} = 0$, for any $\epsilon > 0$, there is an $N$ with the property that $n > N$ implies $\frac{a_n}{b_n} < \epsilon$.  Then, for $n > N$, we have $$\frac{S_n}{T_n} = \frac{S_N + \sum_{N+1 \leq i \leq n} a_i}{T_n} < \frac{S_N + \epsilon \sum_{N+1\leq i \leq n} b_i}{T_n} = \frac{S_N + \epsilon\left(T_n - T_N\right)}{T_n}.$$

Thus, $\frac{S_n}{T_n} < \frac{S_N - \epsilon T_N}{T_n} + \epsilon$.  Now, $S_N$ and $T_N$ are constants, and $T_n\rightarrow \infty$ as $n\rightarrow \infty$, so clearly, we can make $\frac{S_n}{T_n}$ as small as we like by taking $n$ large enough.

\end{proof}

Now, suppose $f:\N\rightarrow \N$ is any bijection.  For any $\epsilon > 0$, we define $$M_\epsilon = \{i\in \N: a_{f(i)} < \epsilon b_i\}.$$

\begin{lemma}\label{lem:Meps}For any $\epsilon > 0$, $\sum_{M_\epsilon} b_i$ diverges.

\end{lemma}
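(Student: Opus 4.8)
The plan is to argue by contradiction: suppose $\sum_{M_\epsilon} b_i < \infty$. The key idea is that the complement $M_\epsilon^c = \{i : a_{f(i)} \geq \epsilon b_i\}$ then carries ``most'' of the divergent series $\sum_\N b_i$, and I want to leverage this to contradict Lemma \ref{lem:partialsum}. Specifically, on $M_\epsilon^c$ we have $b_i \leq \frac{1}{\epsilon} a_{f(i)}$, so
\[
\sum_{i \in M_\epsilon^c,\, i \leq n} b_i \;\leq\; \frac{1}{\epsilon} \sum_{i \in M_\epsilon^c,\, i \leq n} a_{f(i)} \;\leq\; \frac{1}{\epsilon} \sum_{j \in f(\{1,\dots,n\})} a_j.
\]
Since $\{a_n\}$ is non-increasing, the sum of any $n$ distinct values $a_j$ is at most $a_1 + a_2 + \dots + a_n = S_n$; hence the right-hand side is at most $\frac{1}{\epsilon} S_n$.

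Next I combine this with the assumed convergence of $\sum_{M_\epsilon} b_i$. Writing $C = \sum_{M_\epsilon} b_i < \infty$, we get for every $n$
\[
T_n \;=\; \sum_{i \in M_\epsilon,\, i \leq n} b_i + \sum_{i \in M_\epsilon^c,\, i \leq n} b_i \;\leq\; C + \frac{1}{\epsilon} S_n.
\]
Therefore $\frac{S_n}{T_n} \geq \frac{S_n}{C + \frac{1}{\epsilon} S_n}$. Since $\sum_\N a_n$ must diverge as well (because $\frac{a_n}{b_n} \to 0$ with $\sum_\N b_n$ divergent forces... actually, more directly: if $\sum_\N a_n$ converged then $S_n$ is bounded; but then $T_n \leq C + \frac{1}{\epsilon}S_n$ is bounded, contradicting the divergence of $\sum_\N b_n$), we have $S_n \to \infty$, and so $\frac{S_n}{C + \frac{1}{\epsilon}S_n} \to \epsilon$. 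This gives $\liminf_n \frac{S_n}{T_n} \geq \epsilon > 0$, contradicting Lemma \ref{lem:partialsum}.

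The main obstacle — really the only subtle point — is the step bounding $\sum_{j \in f(\{1,\dots,n\})} a_j$ by $S_n$; this is exactly where the reduction to non-increasing $\{a_n\}$ (justified in the paragraph preceding Lemma \ref{lem:partialsum}) is essential, since it guarantees that the $n$ largest values of the sequence are precisely $a_1, \dots, a_n$. Everything else is a routine chain of inequalities. I should double-check the edge case handling of whether $\sum_\N a_n$ converges, but as noted above that case is itself immediately contradictory, so it causes no trouble.
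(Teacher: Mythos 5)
Your proof is correct and follows essentially the same route as the paper: both arguments use the non-increasing rearrangement bound $\sum_{i\leq n} a_{f(i)} \leq S_n$ together with $b_i \leq \frac{1}{\epsilon}a_{f(i)}$ on $M_\epsilon^c$ to show that convergence of $\sum_{M_\epsilon} b_i$ would force $\liminf S_n/T_n \geq \epsilon$, contradicting Lemma \ref{lem:partialsum}. Your side discussion of whether $S_n$ is bounded is harmless but unnecessary, since the paper's rearrangement $\frac{S_n}{T_n} \geq \epsilon\left(1 - \frac{\sum_{M_\epsilon} b_i}{T_n}\right)$ needs only $T_n \to \infty$.
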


\begin{proof}
Since $f$ is a bijection and $a_n$ is non-increasing, we have $\sum_{i\leq n} a_{f(i)}\leq \sum_{i\leq n} a_i = S_n$.

Thus, we compute $$\frac{S_n}{T_n} \geq \frac{\sum_{i\leq n} a_{f(i)}}{T_n} \geq \frac{\sum_{i\leq n, i\notin M_\epsilon} a_{f(i)}}{T_n} \geq \frac{\epsilon \sum_{i\leq n, i\notin M_\epsilon} b_i }{T_n} = \frac{\epsilon(T_n - \sum_{i\leq n, i\in M_\epsilon} b_i)}{T_n}.$$

Now, if $\sum_{M_\epsilon} b_i$ converges, then the fact that $T_n\rightarrow \infty$ as $n\rightarrow\infty$ implies that $\frac{S_n}{T_n} \geq \frac{\epsilon}{2}$ for $n$ large enough.  This contradicts Lemma \ref{lem:partialsum}.

\end{proof}

We can now finish the proof of Theorem \ref{thm:homeo}.

\begin{proof}

We now construct a set $A$ for which $\sum_{f(A)} a_n$ converges, but $\sum_{A} b_n$ diverges.  Assuming we can do this, this shows that $f$ cannot be a homeomorphism.

We construct $A$ as a disjoint union $A = \bigcup_m A_m$, where each $A_m$ is finite.  The sets $A_m$ are defined inductively, beginning with $A_0 = \emptyset$.

Now, assuming $A_0, A_1, ..., A_{m-1}$ have been defined and are finite, we now define $A_{m}$.  To do so, consider the set $$X_m:=M_{2^{-m}} \setminus \left(\left\{i\in \N: a_{f(i)} \geq 2^{-m}\right\} \cup \bigcup_{i<m} A_i \right).$$  Notice $\left\{i\in \N: a_{f(i)} \geq 2^{-m}\right\}$ is a finite set since $\lim_{n\rightarrow\infty} a_n = 0$.  Likewise, inductively, $\bigcup_{i<m} A_i$ is a finite set.  Since $X_m$ and $M_{2^{-m}}$ differ in only a finite set, Lemma \ref{lem:Meps} implies that $\sum_{X_m} b_n$ diverges.  In particular, there is a finite subset $Y_m\subseteq X_m$ for which $\sum_{Y_m} b_n > 1$.  Among all the subsets of $Z_m\subseteq Y_m$ for which $\sum_{Z_m} b_n > 1$, we let $A_m$ denote one with minimal cardinality.  Then $\sum_{A_m} b_n > 1$ but for any $x\in A_m$, $\sum_{A_m\setminus\{x\}} b_n \leq 1$.

Now, set $A = \bigcup A_m$.  We claim that $\sum_A b_n$ diverges.  Indeed, we have $$\sum_A b_n = \sum_{m=1}^\infty \sum_{A_m} b_n  \geq \sum_{m=1}^\infty 1.$$

We next claim that $\sum_{f(A)} a_n$ converges.  To see this, first, let $x_m$ be any element in $A_m$.  Since $x_m\in A_m\subseteq X_m$, $a_{f(x_m)} < 2^{-m}$.  Further, since $A_m \subseteq M_{2^{-m}}$ and $\sum_{A_m\setminus\{x_m\}} b_n \leq 1$, $$\sum_{A_m\setminus\{x_m\}} a_{f(n)} \leq 2^{-m} \sum_{A_m\setminus \{x_m\}} b_n \leq 2^{-m}.$$

Thus, we see that $$\sum_{f(A_m)} a_n = a_{f(x_m)} + \sum_{f(A_m\setminus \{x_m\})} a_n < 2^{-m} + \sum_{A_m\setminus\{x_m\}} a_{f(n)}  \leq  2^{-m} + 2^{-m} = 2^{-m+1}.$$  So, $$\sum_{f(A)} a_n =\sum_{m=1}^\infty \sum_{f(A_m)} a_n < \sum_{m=1}^\infty 2^{-m+1} < \infty.$$

\end{proof}

As a simple corollary, we see that for $0 < p < q \leq 1$, that $\N_{1/n^p}$ and $\N_{1/n^q}$ are not homeomorphic.  Thus, forming a series topology generates at least $|\mathbb{R}|$ many topologies which are distinct up to homeomorphism.  On the other hand, there are only $|\mathbb{R}|^{|\mathbb{N}|} = |2^{|\mathbb{N}|}|^{|\mathbb{N}|} = |2^{|\mathbb{N}||\mathbb{N}|}| = |\mathbb{R}|$ series.  So there are precisely $|\mathbb{R}|$-many topologies on $\N$ derived from series.

\subsection{A proof of Theorem \ref{thm:constant}}

We conclude with a proof of Theorem \ref{thm:constant}.  We will henceforth assume $p\in[0,1]$ and that $p < q$.  We must prove that any continuous map $f:\N_{1/n^p}\rightarrow \N_{1/n^q}$ is constant.

We handle the case $p = 0$ separately from the case $p > 0$.  For, if $p=0$, then $\N_{1/n^0}$ is the $\N$ with the cofinite topology, whereas $\N_{1/n^q}$ is not cofinite.  Since any non-constant continuous function $f:\N_{1/n^0}\rightarrow \N_{1/n^q}$ has infinite image (Proposition \ref{prop:NtoN}), Lemma \ref{lem:closedsub} gives an infinite closed $A\subseteq f(\N_{1/n^0})$.  Then $f^{-1}(A)$ is infinite, so not closed in $\N_{1/n^0}$.

We prove the case $p>0$ following an approach of Georgiy Shevchenko\cite{MSE2}.  The following lemma is due to Georgiy Shevchenko.

\begin{lemma}\label{lem:findA}
Suppose $a_n$ and $b_n$ are positive, that $\sum_\N b_n = \infty$, and that $\lim_{n\rightarrow\infty} b_n = \lim_{n\rightarrow\infty} \frac{a_n}{b_n} = 0$.  Then there is a subset $A\subseteq \N$ with the property that $\sum_A b_n$ diverges while $\sum_A a_n$ converges.

\end{lemma}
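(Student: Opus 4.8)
The plan is to mimic the construction used in the proof of Theorem \ref{thm:homeo}, but in the simpler setting where there is no bijection $f$ to contend with — so the role of the sequence $\{a_{f(i)}\}$ is simply played by $\{a_i\}$ itself. First I would record the analogue of Lemma \ref{lem:partialsum}: writing $S_n = \sum_{i\le n} a_i$ and $T_n = \sum_{i\le n} b_i$, the hypothesis $a_n/b_n \to 0$ together with $T_n \to \infty$ gives $S_n/T_n \to 0$ by exactly the estimate in Lemma \ref{lem:partialsum}. Then, for each $\epsilon > 0$, set $M_\epsilon = \{i \in \N : a_i < \epsilon b_i\}$; the same argument as in Lemma \ref{lem:Meps} (now without needing monotonicity of $a_n$, since we compare $\sum_{i\le n} a_i$ with itself rather than with $\sum a_{f(i)}$) shows $\sum_{M_\epsilon} b_i = \infty$. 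Concretely, if $\sum_{M_\epsilon} b_i$ converged, then $\sum_{i\le n,\, i\notin M_\epsilon} a_i \ge \epsilon\bigl(T_n - \sum_{i\le n,\,i\in M_\epsilon} b_i\bigr)$ would force $S_n/T_n \ge \epsilon/2$ for large $n$, contradicting the first step.

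With these two facts in hand, I would build $A = \bigcup_m A_m$ inductively as a disjoint union of finite sets, just as in the proof of Theorem \ref{thm:homeo}. Start with $A_0 = \emptyset$; having chosen finite $A_0,\dots,A_{m-1}$, consider
\[
X_m := M_{2^{-m}} \setminus \Bigl( \{ i \in \N : a_i \ge 2^{-m} \} \cup \bigcup_{i<m} A_i \Bigr).
\]
The set $\{i : a_i \ge 2^{-m}\}$ need not be finite in general — but here is where the hypothesis $b_n \to 0$ enters, combined with $a_n/b_n \to 0$: in fact $a_n \to 0$ follows from $a_n = (a_n/b_n)\, b_n$ and both factors being bounded (indeed $a_n/b_n \to 0$ and $b_n \to 0$), so $\{i : a_i \ge 2^{-m}\}$ is finite. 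Hence $X_m$ differs from $M_{2^{-m}}$ by a finite set, so $\sum_{X_m} b_i = \infty$, and we may pick a finite $Y_m \subseteq X_m$ with $\sum_{Y_m} b_i > 1$, then let $A_m \subseteq Y_m$ be a subset of minimal cardinality with $\sum_{A_m} b_i > 1$. Minimality gives $\sum_{A_m \setminus \{x\}} b_i \le 1$ for every $x \in A_m$. Setting $A = \bigcup_m A_m$, divergence of $\sum_A b_n$ is immediate since $\sum_A b_n \ge \sum_{m\ge 1} 1 = \infty$. For convergence of $\sum_A a_n$: fixing any $x_m \in A_m$, we have $a_{x_m} < 2^{-m}$ (as $x_m \in X_m \subseteq M_{2^{-m}}$ and $a_{x_m} < 2^{-m}$ from the excised set), and $\sum_{A_m \setminus \{x_m\}} a_n \le 2^{-m} \sum_{A_m \setminus \{x_m\}} b_n \le 2^{-m}$ since $A_m \subseteq M_{2^{-m}}$; hence $\sum_{A_m} a_n < 2^{-m+1}$ and $\sum_A a_n < \sum_{m\ge 1} 2^{-m+1} < \infty$.

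The main obstacle is really only cosmetic: convincing oneself that the proof of Theorem \ref{thm:homeo} goes through verbatim after deleting every occurrence of $f$, and checking that the two places where monotonicity of $\{a_n\}$ or finiteness of $\{i : a_i \ge 2^{-m}\}$ was used are still valid here — the former is not needed at all in this simpler setting, and the latter follows from $a_n \to 0$, which is a consequence of the stated hypotheses. So I would structure the write-up as: (i) state and prove the $S_n/T_n \to 0$ claim, (ii) state and prove the $\sum_{M_\epsilon} b_i = \infty$ claim, (iii) carry out the inductive construction and verify the two series conditions. Alternatively, and more economically, I could simply remark that Lemma \ref{lem:findA} is the special case of the combined Lemmas \ref{lem:partialsum}–\ref{lem:Meps} and the proof of Theorem \ref{thm:homeo} obtained by taking $f = \mathrm{id}$, noting that the monotonicity assumption on $\{a_n\}$ was used in Lemma \ref{lem:Meps} only to guarantee $\sum_{i\le n} a_{f(i)} \le S_n$, which is an equality when $f = \mathrm{id}$.
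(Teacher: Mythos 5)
Your proof is correct, but it takes a genuinely different route from the paper's. The paper proves Lemma \ref{lem:findA} directly: since $b_n\to 0$ but $\sum_\N b_n=\infty$, one can choose blocks of consecutive integers $\{n_k,\dots,n_k'\}$ lying beyond the point where $a_n/b_n\le 1/k$, with $\sum_{i=n_k}^{n_k'}b_i$ squeezed into the window $(1/k,2/k)$; taking $A$ to be the union of these blocks gives $\sum_A b_n\ge\sum_k 1/k=\infty$ and $\sum_A a_n\le\sum_k 2/k^2<\infty$. You instead recycle the $M_\epsilon$/minimal-cardinality machinery from the proof of Theorem \ref{thm:homeo} with $f=\mathrm{id}$, and every step you check does go through: monotonicity of $\{a_n\}$ is indeed only needed in Lemma \ref{lem:Meps} to get $\sum_{i\le n}a_{f(i)}\le S_n$, which is an equality for the identity, and $\{i:a_i\ge 2^{-m}\}$ is finite because $a_n=(a_n/b_n)\,b_n\to 0$. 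The two proofs use the hypothesis $b_n\to 0$ differently: the paper uses it to keep each block's $b$-sum below $2/k$ (you cannot overshoot the target $1/k$ by much when individual terms are small), while you use it only to get $a_n\to 0$ and control the overshoot by the remove-one-element trick. One simplification you missed: with $f=\mathrm{id}$ the set $M_\epsilon$ is cofinite (it contains every $n$ with $a_n/b_n<\epsilon$, hence all sufficiently large $n$), so $\sum_{M_\epsilon}b_i=\infty$ is immediate from $\sum_\N b_n=\infty$, and your steps (i) and (ii) --- the $S_n/T_n\to 0$ estimate and the contradiction argument --- can be deleted entirely. With that observation your argument is a clean, self-contained alternative; as written it is correct but carries more machinery than the lemma requires.
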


\begin{proof}
For each $k\in \N$, we let $N_k$ be a natural number with the property that $a_n/b_n\leq1/k$ for any $n \geq N_k$.  Because $\lim_{n\rightarrow \infty} b_n = 0$, but $\sum_\N b_n = \infty$, we can find $n_1', n_1 \geq N_1$ with $n_1'> n_1$ for which $$\sum_{i = n_1}^{n_1'} b_i \in (1,2).$$  Inductively, we can find $n_k', n_k \geq N_k$ with $n_k' > n_k > n_{k-1}'$ for which $$\sum_{i=n_k}^{n_k'} b_i \in\left(\frac{1}{k}, \frac{2}{k}\right).$$

Finally, we set $A = \bigcup_k\{n_k, n_k + 1, n_k + 2,..., n_k'\}$.  Because $$\sum_{\{n_k, n_k+1 ,..., n_k'\}} b_n > 1/k,$$ $\sum_A b_n > \sum_\N 1/n$, so diverges.  On the other hand, $$\sum_{\{n_k, n_k+1 ,..., n_k'\}} a_n \leq \frac{1}{k} \sum_{\{n_k, n_k+1 ,..., n_k'\}} b_n \in\left (\frac{1}{k^2}, \frac{2}{k^2}\right),$$ so $\sum_A a_n \leq 2\sum_\N  1/n^2 < \infty$.

\end{proof}

Now, suppose $0< p\leq 1$ and $p< q$ and that $f:\N_{1/n^p}\rightarrow \N_{1/n^q}$ is any non-constant continuous function.  For each $i\in \N$, we let $u_i = \sum_{f^{-1}(i)} 1/n^p$, with the convention that this sum is zero if $f^{-1}(i)$ is empty.  Note $\sum_{f^{-1}(i)} 1/n^p$ is finite, as $f^{-1}(i)$ is a closed subset of $\N_{1/n^p}$.

\begin{proposition}\label{prop:unconv}  If $f$ is continuous, then $\lim_{n\rightarrow \infty} u_n = 0$.

\end{proposition}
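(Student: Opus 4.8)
The plan is to argue by contradiction: suppose $f$ is continuous and non-constant but $u_n \not\to 0$. Then there is an $\epsilon > 0$ and an infinite set of indices $i_1 < i_2 < \cdots$ with $u_{i_k} \geq \epsilon$ for all $k$. The goal is to manufacture from this a set $A \subseteq \N_{1/n^p}$ which is closed (i.e. $\sum_A 1/n^p < \infty$) but whose image $f(A)$ is not closed in $\N_{1/n^q}$, contradicting continuity — since $f$ continuous forces $f^{-1}(\overline{f(A)})$ to be closed and contain $A$, hence $\overline{f(A)} \ne \N$ would make $A$ sit inside a proper closed set, but we want to see $\overline{f(A)} = \N$ is impossible too, so really the cleanest route is to build $A$ closed in $\N_{1/n^p}$ with $f(A)$ dense (not closed) in $\N_{1/n^q}$ yet $f^{-1}(f(A)^{-})$ forced closed — let me instead phrase it as: build $B \subseteq \N$ (a set of target indices) with $\sum_B 1/n^q = \infty$ (so $B$ is not closed downstairs) but with $\sum_{f^{-1}(B)} 1/n^p < \infty$ (so $f^{-1}(B)$ is closed upstairs). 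Continuity says $f^{-1}$ of a closed set is closed, equivalently $f$ of a... — the honest statement is: if $B$ is \emph{not} closed in $\N_{1/n^q}$ then $B = \N$ or $B$ is dense; and $\overline{B} = \N$ gives $f^{-1}(B) \subseteq f^{-1}(\overline{B}) = \N$, no contradiction there. So the real target is to get $f^{-1}(B)$ closed while $B$ itself fails to be closed \emph{and} is not all of $\N$; then $\overline B$ is a proper... no. Let me re-plan.

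The correct target, matching the structure of Lemma~\ref{lem:findA}, is: exhibit a set $A \subseteq \N$ with $\sum_A 1/n^q = \infty$ (divergent with the $b$-weights, $b_n = 1/n^q$) while $\sum_A 1/n^p < \infty$ would be false since $p < q$ — wait, that's backwards. Since $p < q$, $1/n^q$ is the \emph{smaller} sequence, so $a_n = 1/n^q$ and $b_n = 1/n^p$ is the wrong assignment for the hypotheses of Lemma~\ref{lem:findA} which needs $a_n/b_n \to 0$, i.e. the summed-over sequence dominates. Here $u_n = \sum_{f^{-1}(n)} 1/n^p$ are \emph{new} weights on the target copy of $\N$; the idea must be that $\{u_n\}$, if bounded away from zero along a subsequence, behaves like (a piece of) the divergent harmonic-type series, because $\sum_n u_n = \sum_{\N} 1/n^p = \infty$ as the $f^{-1}(i)$ partition $\N$. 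So first I would record $\sum_n u_n = \infty$. Then, supposing $u_{i_k} \geq \epsilon$ infinitely often, the set $A' = \{i_k\}$ of target indices has $\sum_{A'} u_n = \infty$; pulling back, $f^{-1}(A') = \bigcup_k f^{-1}(i_k)$ has $\sum_{f^{-1}(A')} 1/n^p = \sum_k u_{i_k} = \infty$. That only tells us the pullback is not closed upstairs — consistent with continuity, no contradiction. So the $u_{i_k} \geq \epsilon$ must instead be exploited differently: pick \emph{one} point $x_k \in f^{-1}(i_k)$ for each $k$ and throw the rest of $f^{-1}(i_k)$ into a "reservoir"; the reservoir $\bigcup_k (f^{-1}(i_k) \setminus \{x_k\})$ has $1/n^p$-sum $\sum_k (u_{i_k} - x_k^{-p})$ which could be infinite or finite depending on choices, so one should choose the $i_k$ growing fast enough that the single points $x_k$ are summable and choose a minimal-cardinality sub-collection inside each $f^{-1}(i_k)$ summing past $\epsilon/2$, exactly as in the proof of Theorem~\ref{thm:homeo}. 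This is the real mechanism: build $A = \bigcup_k A_k$ with $A_k \subseteq f^{-1}(i_k)$ finite, $\sum_{A_k} 1/n^p \in (\epsilon/2, \epsilon]$ via a minimal subset, so each $A_k$ has one "big" element whose removal drops the sum below $\epsilon/2$; then $\sum_A 1/n^p = \sum_k (\text{something} > \epsilon/2) = \infty$, so $A$ is \emph{not} closed in $\N_{1/n^p}$, while $f(A) \subseteq \{i_k\}$ and by passing to a sparse subsequence of the $i_k$ one arranges $\sum_{\{i_k\}} 1/n^q < \infty$ so $f(A)$ \emph{is} closed in $\N_{1/n^q}$. But $f$ continuous does \emph{not} require $f$ to be a closed map — so even this is not yet a contradiction!

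Here is where I expect the genuine subtlety, and where I would lean on the surrounding results rather than reinventing them: the contradiction in this circle of ideas comes from \emph{both} directions failing simultaneously, i.e. from the fact (to be proven in a subsequent lemma, for which this Proposition is a preparatory step) that if $u_n \to 0$ one can run the Lebedev/Shevchenko argument to show $f^{-1}$ of a suitable divergent set stays convergent. So for \emph{this} Proposition I only need the weaker conclusion $u_n \to 0$, and I believe the intended proof is short: \textbf{Step 1}, note $\sum_n u_n = \sum_{i \in f(\N)} \sum_{f^{-1}(i)} 1/n^p = \sum_{\N} 1/n^p = \infty$ (this needs $f(\N)$ — it's fine, the $f^{-1}(i)$ for $i \notin f(\N)$ are empty and contribute $0$). \textbf{Step 2}, by Proposition~\ref{prop:NtoN}, since $\sum_\N 1/n^p$ diverges ($p \le 1$) and $f$ is non-constant, $f(\N)$ is infinite; enumerate it. \textbf{Step 3}, suppose for contradiction $u_n \not\to 0$, extract $i_1 < i_2 < \cdots$ with $u_{i_k} \ge \epsilon$. \textbf{Step 4}, inside each $f^{-1}(i_k)$, which is closed (finite $1/n^p$-mass $u_{i_k} \ge \epsilon$), choose a finite minimal subset $A_k$ with $\sum_{A_k} 1/n^p > \epsilon/2$, and set $A = \bigcup_k A_k$. \textbf{Step 5}, then $A$ is not closed in $\N_{1/n^p}$, so $\overline{A} = \N$ by Proposition~\ref{prop:basicproperties}(3); hence $f(\overline A) = f(\N) = \N$ has closure $\N$, but on the other hand $\overline{f(A)} \supseteq f(\overline A)$... this reverse inclusion $f(\overline A) \subseteq \overline{f(A)}$ is the one continuity gives, so $\overline{f(A)} = \N$, meaning $f(A) \subseteq \{i_k : k \in \N\}$ is dense in $\N_{1/n^q}$ — but a subset of $\{i_k\}$ is dense only if $\sum_{\{i_k\}} 1/n^q = \infty$ (else it is a proper closed set, not dense). \textbf{Step 6}, so to reach a contradiction we must have \emph{chosen} the $i_k$ sparsely enough that $\sum_k 1/i_k^{\,q} < \infty$ — which we can always do, since there are infinitely many indices $i$ with $u_i \ge \epsilon$, we pass to a subsequence with $i_k \ge 2^k$, say, forcing $\sum_k 1/i_k^{\,q} \le \sum_k 2^{-kq} < \infty$. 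With this choice $f(A)$ sits in a proper closed set, so $\overline{f(A)} \ne \N$, contradicting Step 5. Hence $u_n \to 0$.

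The step I expect to be the main obstacle is \textbf{Step 4/5}: making sure the set $A$ we build is genuinely \emph{not} closed upstairs. The minimal-subset trick guarantees $\sum_{A_k} 1/n^p > \epsilon/2$ for every $k$, so $\sum_A 1/n^p \ge \sum_k \epsilon/2 = \infty$, giving non-closedness — this part is clean. The subtle point is that we are free to \emph{first} thin out the set $\{i : u_i \ge \epsilon\}$ to a sparse subsequence $\{i_k\}$ \emph{and then} do the minimal-subset construction; both freedoms are available because there are infinitely many $i$ with $u_i \ge \epsilon$ (that is exactly the contrapositive of $u_n \to 0$ — well, $u_n \not\to 0$ gives $\limsup u_n \ge \epsilon > 0$ for some $\epsilon$, hence infinitely many such $i$). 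One should double-check that finiteness of each $f^{-1}(i_k)$'s mass $u_{i_k}$ is used only to know a finite minimal subset exceeding $\epsilon/2$ exists — it does, since finitely many positive terms summing to $\ge \epsilon$ certainly contain a finite sub-collection summing into $(\epsilon/2, \epsilon]$, e.g. by removing elements one at a time. I would write the contradiction in the form "$f^{-1}$ of the proper closed set $\{i_k\}^{\,-}$... " — concretely: $C := \{i_k : k \in \N\}$ satisfies $\sum_C 1/n^q < \infty$ so $C$ is closed in $\N_{1/n^q}$; then $f^{-1}(C)$ is closed in $\N_{1/n^p}$; but $A \subseteq f^{-1}(C)$, so by Proposition~\ref{prop:basicproperties}(2), $A$ is closed, contradicting $\sum_A 1/n^p = \infty$. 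That is the cleanest phrasing and it avoids any worry about images versus closures.
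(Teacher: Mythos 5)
Your final ``cleanest phrasing'' --- thin the infinitely many indices with $u_i \ge \epsilon$ down to a set $C$ with $\sum_C 1/n^q < \infty$, so that $C$ is closed in $\N_{1/n^q}$ while $\sum_{f^{-1}(C)} 1/n^p \ge \sum_k \epsilon = \infty$ shows $f^{-1}(C)$ is not closed --- is exactly the paper's proof, which obtains the sparse subset via Lemma~\ref{lem:closedsub} rather than by taking $i_k \ge 2^k$ by hand. The long detours (minimal subsets $A_k$, image-versus-closure arguments) are unnecessary, but the argument you ultimately settle on is correct and identical in substance to the paper's.
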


\begin{proof}

Assume not.  Then there is some $\epsilon > 0$ with the property that $u_n \geq \epsilon$ for $n$ in some infinite set $A\subseteq \N$.  From Lemma \ref{lem:closedsub}, we can find an infinite subset $B\subseteq A$ for which $\sum_B 1/n^q<\infty$.  Then $B$ is closed in $\N_{1/n^q}$, but $$\sum_{f^{-1}(B)} \frac{1}{n^p} = \sum_B u_n \geq \sum_\N \epsilon,$$ which diverges.  This contradicts the fact that $f$ is continuous.

\end{proof}

Now, we claim that $\sum_\N u_n^{1/p}$ diverges.  Indeed, since $1/p \geq 1$, the function $x\mapsto x^{1/p}$ is convex, and thus, super-additive.  In particular, for each term $$u_i^{1/p} = \left(\sum_{f^{-1}(i)} \frac{1}{n^p}\right)^{1/p} \geq \sum_{f^{-1}(i)} \frac{1}{n}.$$  Then $$\sum_\N u_i^{1/p} \geq \sum_\N \sum_{n\in f^{-1}(i)} \frac{1}{n} = \sum_\N \frac{1}{n},$$ so diverges.

Now, let $r\in(1, q/p)$.  Then clearly $$\sum_{\{n:u_n^{1/p} \leq \frac{1}{n^r}\}} u_n^{1/p} \leq \sum_\N \frac{1}{n^r} < \infty,$$ so it follows that $$\sum_{\{n:u_n^{1/p} > \frac{1}{n^r}\}} u_n^{1/p}$$ diverges.  We let $$C =\{n\in \N: u_n^{1/p} > \frac{1}{n^r}\}$$ and write $C = \{c_1, c_2, ...\}$ where $c_1 < c_2 $, etc.  Thus $\sum_\N u_{c_n}^{1/p}$ diverges.  Since $1/p \geq 1$ and $u_n\rightarrow 0$ as $n\rightarrow \infty$, it follows that for large $n$, $u_{c_n} \geq u_{c_n}^{1/p}$.  In particular, we have the following proposition.

\begin{proposition}\label{prop:sumdiv}
The series $\sum_\N u_{c_n}$ diverges.

\end{proposition}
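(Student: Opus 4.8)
The plan is to deduce the divergence of $\sum_\N u_{c_n}$ from the divergence of $\sum_\N u_{c_n}^{1/p}$, which is already guaranteed by the way the index set $C$ was chosen just before the statement. The bridge between the two series is the elementary inequality: if $0 < x \le 1$ and $s \ge 1$, then $x^s \le x$. Applying this with $s = 1/p \ge 1$ (this is where $p \le 1$ enters) shows that $u_{c_n}^{1/p} \le u_{c_n}$ as soon as $u_{c_n} \le 1$, after which a term-by-term comparison of tails finishes the argument.

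To carry this out I would first observe that $C$ is infinite: if $C$ were finite, then $\sum_{n\in C} u_n^{1/p}$ would be a finite sum, contradicting the divergence of $\sum_{\{n : u_n^{1/p} > 1/n^r\}} u_n^{1/p}$ noted above. Hence the enumeration $c_1 < c_2 < \cdots$ is a genuine strictly increasing sequence of natural numbers, so $c_n \to \infty$ as $n \to \infty$. Combining this with Proposition \ref{prop:unconv}, which says $u_m \to 0$ as $m \to \infty$, we obtain $u_{c_n} \to 0$ as $n \to \infty$; in particular there is an $N$ with $u_{c_n} \le 1$ for all $n \ge N$.

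For every $n \ge N$ the inequality $u_{c_n}^{1/p} \le u_{c_n}$ holds, so $\sum_{n \ge N} u_{c_n} \ge \sum_{n \ge N} u_{c_n}^{1/p}$. The right-hand side diverges, since deleting the finitely many terms with $n < N$ does not affect the divergence of $\sum_\N u_{c_n}^{1/p}$; hence $\sum_\N u_{c_n}$ diverges as well. There is essentially no obstacle in this step; the only points requiring a moment's care are confirming that $C$ is infinite (so that Proposition \ref{prop:unconv} may legitimately be applied along the subsequence $\{c_n\}$) and recalling that $x^{1/p} \le x$ on $(0,1]$ needs $1/p \ge 1$, i.e. $p \le 1$, which is exactly the standing hypothesis on $p$ in Theorem \ref{thm:constant}.
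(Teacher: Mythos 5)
Your argument is correct and is essentially the same as the paper's: the paper also deduces divergence of $\sum_\N u_{c_n}$ by comparing with the divergent series $\sum_\N u_{c_n}^{1/p}$, using $u_{c_n}\rightarrow 0$ and $1/p\geq 1$ to get $u_{c_n}\geq u_{c_n}^{1/p}$ for large $n$. Your write-up merely makes explicit a couple of points the paper leaves implicit (that $C$ is infinite and that the inequality $x^{1/p}\leq x$ requires $x\leq 1$).
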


Finally, we have the following proposition.

\begin{proposition}\label{prop:lim}  We have $$\lim_{n\rightarrow\infty} \frac{1/(c_n)^q}{u_{c_n}} = 0.$$

\end{proposition}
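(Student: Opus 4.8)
The plan is to exploit the defining inequality of the set $C$ directly; no further analytic input is needed beyond what was just established. Recall that $C = \{n\in\N : u_n^{1/p} > 1/n^r\}$ for a fixed $r\in(1,q/p)$, and that $c_1 < c_2 < \cdots$ enumerates $C$. The first thing I would record is that $C$ is infinite: we have just argued that $\sum_\N u_{c_n}^{1/p}$ diverges, and a series of positive terms cannot diverge unless it has infinitely many terms. Hence the strictly increasing sequence $(c_n)$ of natural numbers satisfies $c_n\to\infty$.

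Next I would unwind the membership condition $c_n\in C$, which reads $u_{c_n}^{1/p} > 1/(c_n)^r$ and therefore gives $u_{c_n} > (c_n)^{-rp}$; in particular $u_{c_n} > 0$, so the quotient in the statement is well-defined. Substituting this lower bound for $u_{c_n}$ yields
$$0 < \frac{1/(c_n)^q}{u_{c_n}} < \frac{(c_n)^{-q}}{(c_n)^{-rp}} = (c_n)^{\,rp-q}.$$
Since $r < q/p$ we have $rp < q$, so the exponent $rp-q$ is strictly negative; combined with $c_n\to\infty$ this forces $(c_n)^{rp-q}\to 0$, and the squeeze theorem gives $\lim_{n\to\infty}\frac{1/(c_n)^q}{u_{c_n}} = 0$.

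There is no genuine obstacle here: the substance of the argument has already been carried out in Propositions \ref{prop:unconv} and \ref{prop:sumdiv} and in the choice of $r$, and this proposition is the short bookkeeping step that packages those facts. The only point worth stating carefully is that $(c_n)$ actually tends to infinity, which is why I isolate the observation that $C$ is infinite before performing the estimate. With Proposition \ref{prop:lim} in hand, the series $\sum_\N u_{c_n}$ (divergent, by Proposition \ref{prop:sumdiv}, with $u_{c_n}\to 0$ by Proposition \ref{prop:unconv}) and $\sum_\N 1/(c_n)^q$ satisfy the hypotheses of Lemma \ref{lem:findA}, which is exactly the input the remaining argument will use to manufacture a closed set in $\N_{1/n^q}$ whose $f$-preimage fails to be closed in $\N_{1/n^p}$.
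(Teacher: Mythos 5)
Your proof is correct and follows essentially the same route as the paper's: unwind the defining inequality of $C$ to get $u_{c_n} > (c_n)^{-rp}$, bound the quotient by $(c_n)^{rp-q}$, and apply the squeeze theorem using $rp - q < 0$ and $c_n \to \infty$. Your explicit remark that $C$ is infinite (hence $c_n \to \infty$) is a detail the paper leaves implicit, but it is the same argument.
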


\begin{proof}  Since $c_n \in C$, each $u_{c_n} > 0$, so the terms in the limit are defined.  Now, note that if $u_n^{1/p} > \frac{1}{n^r}$, then $u_n > \frac{1}{n^{rp}}$.  For large $n$, we see $$0\leq \frac{1/(c_n)^q}{u_{c_n}} \leq  \frac{1/(c_n)^q}{1/(c_n)^{rp}}= c_n^{rp-q}.$$  Since $r < q/p$, we have $rp -q < 0$.  Further, $c_n\rightarrow \infty$ as $n\rightarrow \infty$.  Thus $\lim_{n\rightarrow\infty} c_n^{rp-q} = 0$.  The result now follows from the squeeze theorem.
\end{proof}

If $f$ is continuous, then Propositions \ref{prop:unconv}, \ref{prop:sumdiv}, and \ref{prop:lim} exactly show that that the sequences $a_n = 1/(c_n)^q$ and $b_n = u_{c_n}$ satisfy the conditions of Lemma \ref{lem:findA}.  In particular, there is a set $A\subseteq \N$ for which $\sum_{A} u_{c_n}$ diverges, but $\sum_A \frac{1}{(c_n)^q}$ converges.  This, in turn, means there is a subset $B\subseteq \N$ for which $\sum_{B} u_n$ diverges, but $\sum_{B} \frac{1}{n^q}$ converges.  However, we have $$\sum_{f^{-1}(B)} \frac{1}{n^p} = \sum_{i\in B} \sum_{n\in f^{-1}(i)} \frac{1}{n^p} = \sum_{B} u_n = \infty.$$  Thus, $B$ is closed in $\N_{1/n^q}$, but $f^{-1}(B)$ is not closed in $\N_{1/n^p}$.  This contradicts the fact that $f$ is continuous, and concluded the proof of Theorem \ref{thm:constant}.

\bibliographystyle{plain}
\bibliography{zachbib}

\begin{thebibliography}{1}

\bibitem{Fl}
Jana Fla\v{s}kov\'{a}.
\newblock Description of some ultrafilters via i-ultrafilters.
\newblock {\em Proc. RIMS}, 1619, 2008.

\bibitem{FrRa}
S.~P. Franklin and M.~Rajagopalan.
\newblock Spaces of diversity one.
\newblock {\em J. Ramanujan Math. Soc.}, 5:7--31, 1990.

\bibitem{Hr}
Michael Hru\v{s}\'{a}k.
\newblock Combinatorics of filters and ideals.
\newblock {\em Contemporary Mathematics}, 533:29--69, 2011.

\bibitem{MSE1}
Litho (Alexey~Lebedev) (https://math.stackexchange.com/users/197288/litho).
\newblock Is there a bijection of the natural numbers which swaps
  $\frac{1}{n}$-summable subsets with $\frac{1}{\sqrt{n}}$-summable subsets?
\newblock Mathematics Stack Exchange.
\newblock URL:https://math.stackexchange.com/q/2429818 (version: 2017-09-18).

\bibitem{Ka}
Miroslav Kat\v{e}tov.
\newblock Products of filters.
\newblock {\em Commentationes Mathematicae Universitatis Carolinae},
  9(1):173--189, 1968.

\bibitem{KwTr}
A.~Kwela and J.~Tryba.
\newblock Homogeneous ideals on countable sets.
\newblock {\em Acta Mathematica Hungarica}, 151(1):139--161, Feb 2017.

\bibitem{Mr}
N.~Mro\.{z}ek.
\newblock Some applications of the {K}at\v{e}tov order on {B}orel ideals.
\newblock {\em Bull. Pol. Acad. Sci. Math.}, 64:21--28, 2016.

\bibitem{Sier}
W.~Sierpi{\'n}ski.
\newblock Un th{\'e}or{\`e}me sur les continus.
\newblock {\em T{\^o}hoku Mathematical Journal}, 13:300--303, 1918.

\bibitem{MSE2}
zhoraster (Georgiy
  Shevchenko)~(https://math.stackexchange.com/users/262269/zhoraster).
\newblock Is there a bijection of the natural numbers which swaps
  $\frac{1}{n}$-summable subsets with $\frac{1}{\sqrt{n}}$-summable subsets?
\newblock Mathematics Stack Exchange.
\newblock URL:https://math.stackexchange.com/q/2434530 (version: 2017-09-18).

\end{thebibliography}

\end{document}